\newtheorem{theorem}{Theorem}[section]
\newtheorem{lemma}[theorem]{Lemma}
\def\proofbox{\begin{picture}(6.5,6.5)
\put(0,0){\framebox(6.5,6.5){}}\end{picture}}
\newenvironment{proof}{\noindent{\it Proof.\quad}}{\hfill\proofbox}
\begin{document}

\title{Superinjective Simplicial Maps of the Complexes of Curves on Nonorientable Surfaces}
\author{Elmas Irmak}

\maketitle

\renewcommand{\sectionmark}[1]{\markright{\thesection. #1}}


\thispagestyle{empty}
\maketitle



\begin{abstract} We prove that each superinjective simplicial map of the complex of curves
of a compact, connected, nonorientable surface is induced
by a homeomorphism of the surface, if $(g, n) \in \{(1, 0), (1, 1), (2, 0), (2, 1), (3, 0)\}$ or $g + n \geq 5$,
where $g$ is the genus of the surface and $n$ is the number of the boundary components. \end{abstract}

\maketitle

{\small Key words: Mapping class groups, simplicial maps, nonorientable surfaces

MSC: 32G15, 20F38, 30F10, 57M99}

\section{Introduction}

Let $N$ be a compact, connected, nonorientable surface of genus $g$ (connected sum of $g$ copies of projective planes)
with $n$ boundary components. Mapping class group, $Mod_N$, of $N$ is defined to be the group of isotopy classes of all self-homeomorphisms of $N$.
The \textit{complex of curves}, $\mathcal{C}(N)$, on $N$ is an abstract simplicial complex defined as follows: A simple closed curve
on $N$ is called {\it nontrivial} if it does not bound a disk, a mobius band, and it is not isotopic to a boundary component of $N$.
The vertex set, $A$, of $\mathcal{C}(N)$ is the set of isotopy classes of nontrivial simple closed curves on $N$. A set of vertices forms
a simplex in $\mathcal{C}(N)$ if they can be represented by pairwise disjoint simple closed curves. The geometric intersection number
$i([a], [b])$ of $[a]$, $[b] \in A$ is the minimum number of points of $x \cap y$ where $x \in [a]$ and $y \in [b]$.

A simplicial map $\lambda : \mathcal{C}(N) \rightarrow \mathcal{C}(N)$ is called {\it superinjective}
if it satisfies the following condition: If $[a], [b]$ are two vertices in $\mathcal{C}(N)$, then
$i([a], [b]) = 0$ if and only if $i(\lambda([a]), \lambda([b])) = 0$. Superinjective simplicial maps
preserve geometric intersection zero and nonzero properties by the definition. Superinjective simplicial maps send the
isotopy class of a 1-sided simple closed curve to the isotopy class of a 1-sided simple closed curve, and send the isotopy class of a
2-sided simple closed curve to the isotopy class of a 2-sided simple closed curve on $N$ (see Lemma \ref{easy}). Superinjective
simplicial maps are injective. This follows from the result of Theorem \ref{A} for some small genus cases. The proof is given in Lemma \ref{inj} for the remaining cases.\\

\protect\nopagebreak\noindent\rule{1.5in}{.01in}{\vspace{0.005in}}

\noindent {\small The author was supported by Faculty Research Incentive Grant, BGSU.}\\

The main result of this paper is the following:

\begin{theorem} Let $N$ be a compact, connected, nonorientable surface of genus $g$ with
$n$ boundary components. Suppose that either $(g, n) \in \{(1, 0), (1, 1), (2, 0), (2, 1), (3, 0)\}$ or $g + n \geq 5$.
If $\lambda : \mathcal{C}(N) \rightarrow \mathcal{C}(N)$ is a superinjective simplicial map, then $\lambda$ is
induced by a homeomorphism $h : N \rightarrow N$ (i.e $\lambda([a]) = [h(a)]$ for every vertex $[a]$ in $\mathcal{C}(N)$).\end{theorem}

We note that $g + n = 4$ case is open where $g$ is the genus of the surface and $n$ is the number of the boundary components.

The mapping class group and the complex of curves on orientable surfaces are defined similarly as follows: Let $R$ be a compact,
connected, orientable surface. Mapping class group, $Mod_R$, of $R$ is defined to be the group of isotopy classes of orientation
preserving homeomorphisms of $R$. Extended mapping class group, $Mod_R^*$, of $R$ is defined to be the group of isotopy classes
of all self-homeomorphisms of $R$. The complex of curves $\mathcal{C}(R)$, on $R$ was introduced by Harvey in \cite{Har}. It is defined
as an abstract simplicial complex. The vertex set is the set of isotopy classes of nontrivial simple closed curves, where nontrivial
means it does not bound a disk and it is not isotopic to a boundary component of $R$. A set of vertices forms a simplex
in $\mathcal{C}(R)$ if they can be represented by pairwise disjoint simple closed curves.

Ivanov proved that the automorphism group of the curve complex is isomorphic to the extended mapping class group on orientable surfaces.
As an application he proved that isomorphisms between any two finite index subgroups are geometric, see \cite{Iv1}. Ivanov's results were proven by
Korkmaz in \cite{K1} for lower genus cases. Luo gave a different proof of these results for all cases in \cite{L}. Ivanov and McCarthy classified injective homomorphisms between the mapping class groups of compact, connected, orientable surfaces of genus $\geq 2$ in \cite{IMc}.
After Ivanov's work about automorphisms of complexes of curves, mapping class group was viewed as the automorphism group
of various geometric objects on orientable surfaces. These objects include Schaller's complex (see \cite{S} by Schaller), the
complex of pants decompositions (see \cite{M} by Margalit), the complex of nonseparating curves (see \cite{Ir3} by Irmak), the complex
of separating curves (see \cite{BM1} by Brendle-Margalit, and \cite{MV} by McCarthy-Vautaw), the complex of Torelli geometry (see \cite{FIv}
by Farb-Ivanov), the Hatcher-Thurston complex (see \cite{IrK} by Irmak-Korkmaz), and the complex of arcs (see \cite{IrM} by Irmak-McCarthy).
As applications, Farb-Ivanov proved that the automorphism group of the Torelli subgroup is isomorphic to the mapping class group in \cite{FIv},
and McCarthy-Vautaw extended this result to $g \geq 3$ (where $g$ is the genus of the surface) in \cite{MV}.

On orientable surfaces: Irmak proved that superinjective simplicial maps of the curve complex are induced by homeomorphisms of the surface
to classify injective homomorphisms from finite index subgroups of the mapping class group to the whole group (they are geometric except
for closed genus two surface) for genus at least two in \cite{Ir1}, \cite{Ir2}, \cite{Ir3}. Behrstock-Margalit and Bell-Margalit proved
these results for lower genus cases in \cite{BhM} and in \cite{BeM}. Brendle-Margalit proved that superinjective simplicial maps of
separating curve complex are induced by homeomorphisms, and using this they proved that an injection from a finite index subgroup of
$K$ to the Torelli group, where $K$ is the subgroup of mapping class group generated by Dehn twists about separating curves, is induced
by a homeomorphism in \cite{BM1}. Shackleton proved that injective simplicial maps of the curve complex are induced by
homeomorphisms in \cite{Sh} (he also considers maps between different surfaces), and he obtained strong local co-Hopfian results
for mapping class groups.

On nonorientable surfaces: For odd genus cases, Atalan proved that the automorphism group of the curve complex is isomorphic
to the mapping class group if $g + r \geq 6$ (where $g$ is the genus of the surface and $r$ is the number of boundary components)
in \cite{A}. Irmak proved that each injective simplicial map from the arc complex
of a compact, connected, nonorientable surface with nonempty boundary to itself is induced by a homeomorphism of the surface
in \cite{Ir4}. She also proved that the automorphism group of the arc complex is isomorphic to the quotient of the mapping class
group of the surface by its center. Atalan-Korkmaz proved that the automorphism group of the curve complex is isomorphic to the
mapping class group in \cite{AK} if $g + r \geq 5$ where $g$ is the genus of the surface and $r$ is the number of boundary components.
They also proved that two curve complexes are isomorphic if and only if the
underlying surfaces are homeomorphic. In this paper we use some results from \cite{AK}.


\section{Some small genus cases}

In this section we will prove our main results for $(g, n) \in \{(1, 0), (1, 1), (2, 0), (2, 1)\}$.
We note that
a simple closed curve is called {\it 1-sided} if its regular neighborhood is a Mobius band. It is called {\it 2-sided}
if its regular neighborhood is an annulus.
There are 1-sided simple closed curves as well as 2-sided simple closed curves on nonorientable surfaces.
Some 1-sided simple closed curves have orientable complements and some have nonorientable complements. We first prove the following
lemma for any compact, connected, nonorientable surface.

\begin{figure}
\begin{center}
\hspace{0.006cm} \epsfxsize=1.1in \epsfbox{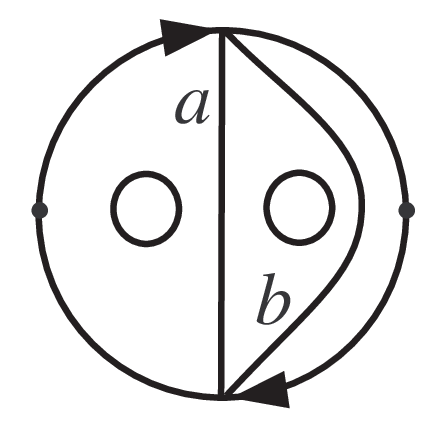} \hspace{1.25cm} \epsfxsize=2.37in \epsfbox{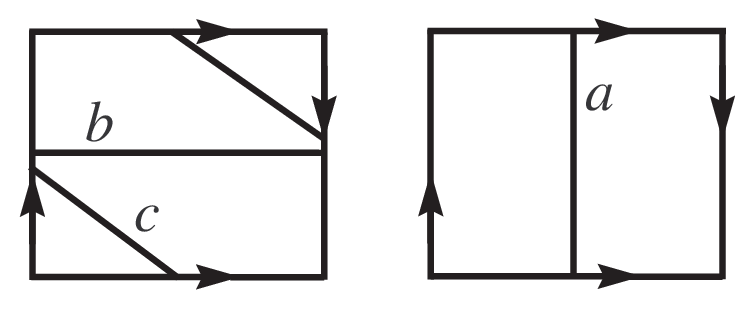}

\small{(i) \hspace{5.23cm} (ii) \hspace{1.1cm}}
\caption{Vertices of $\mathcal{C}(N)$ for (i) $(g, n)=(1,2)$, and (ii) $(g, n)=(2,0)$}
\label{figure1}
\end{center}
\end{figure}

\begin{lemma}
\label{easy} Let $N$ be a compact, connected, nonorientable surface. Let $\lambda : \mathcal{C}(N) \rightarrow \mathcal{C}(N)$
be a superinjective simplicial map. If $a$ is a 1-sided simple closed curve on $N$, then $\lambda([a])$ is the isotopy class of a 1-sided simple closed curve on $N$. If $a$ is a 2-sided simple closed curve on $N$, then $\lambda([a])$ is the isotopy class of a 2-sided simple closed curve on $N$.
\end{lemma}

\begin{proof} A simple closed curve $a$ is 1-sided if and only if $i([a], [a]) =1$. It is 2-sided if and only if $i([a], [a]) = 0$.
If $a$ is a 1-sided simple closed curve on $N$, we have $i([a], [a]) = 1$. Then $i(\lambda([a]), \lambda([a])) \neq 0$
since $\lambda$ is superinjective. So, $\lambda([a])$ is the isotopy class of a 1-sided simple closed curve on $N$. If $a$ is a 2-sided simple closed curve on $N$, we have $i([a], [a]) = 0$. Then $i(\lambda([a]), \lambda([a])) = 0$ since $\lambda$ is superinjective. This implies that $\lambda([a])$ is the isotopy class of a 2-sided simple closed curve on $N$.\end{proof}\\

If $(g, n) = (1, 2)$, there are only two vertices in the curve complex (see \cite{Sc}). They are the isotopy classes of
$a$ and $b$ as shown in Figure \ref{figure1}. The curves $a$ and $b$ are both 1-sided. We see that $i([a], [b]) = 1$. The simplicial map
sending both of these vertices to $[a]$ is superinjective and it is not induced by a homeomorphism.
In this case we can see that every injective simplicial map is induced by a homeomorphism. If the map fixes each of the vertices,
then it is induced by the identity homeomorphism, if it switches them then it is induced by a homeomorphism that switches the curves $a$ and $b$.

\begin{figure}
\begin{center}
\epsfxsize=1.2in \epsfbox{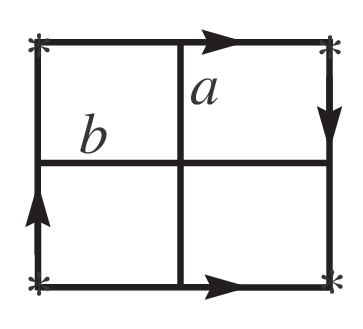} \hspace{0.45cm}
\epsfxsize=3.5in \epsfbox{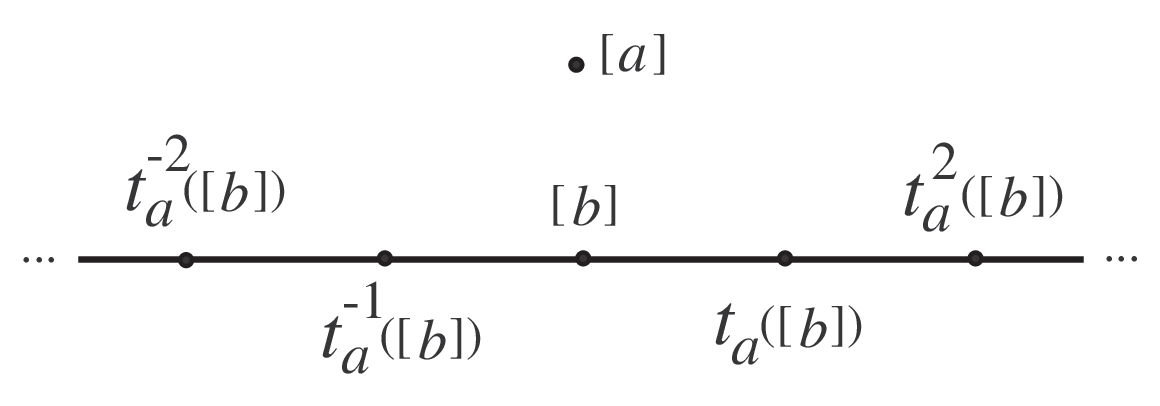}
\caption{Curve complex for $(g, n)=(2,1)$}
\label{figure2}
\end{center}
\end{figure}

\begin{theorem}
\label{A} Let $N$ be a compact, connected, nonorientable surface of genus $g$ with
$n$ boundary components. Suppose that $(g, n) \in \{(1, 0), (1, 1), (2, 0), (2, 1)\}$.
If $\lambda : \mathcal{C}(N) \rightarrow \mathcal{C}(N)$ is a superinjective simplicial map, then $\lambda$ is
induced by a homeomorphism $h : N \rightarrow N$ (i.e $\lambda([a]) = [h(a)]$ for every vertex $[a]$ in $\mathcal{C}(N)$).
\end{theorem}

\begin{proof}
If $(g, n) = (1, 0)$, $N$ is the projective plane. There is only one element (isotopy class of a 1-sided curve)
in the curve complex. Hence, any superinjective simplicial map is induced by the identity homeomorphism.
If $(g, n) = (1, 1)$, $N$ is Mobius band. There is only one element (isotopy class of a 1-sided curve) in the curve complex.
Hence, any superinjective simplicial map is induced by the identity homeomorphism.

If $(g, n) = (2, 0)$, there are only three vertices and one edge in the curve complex (see \cite{Sc}). The vertices are the isotopy
classes of $a$, $b$ and $c$ as shown in Figure \ref{figure1} (ii). The curve $a$ is 2-sided,
$b$ is 1-sided and $c$ is 1-sided. Since $a$ is the only 2-sided curve
among these three curves, $\lambda$ fixes $[a]$ and sends $\{[b], [c]\}$ to $\{[b], [c]\}$ by Lemma \ref{easy}. We see that $i([b], [c]) = 0$.
So, $i(\lambda([b]), \lambda([c]) = 0$. This implies that $\lambda([b]) \neq \lambda([c])$ as both are the isotopy classes of 1-sided curves.
If $\lambda$ fixes each of $[b]$ and $[c]$ then it is induced by the identity homeomorphism, if it switches $[b]$ and $[c]$ then
it is induced by a homeomorphism that switches the 1-sided curves $b$ and $c$, while fixing $a$ up to isotopy.

If $(g, n) = (2, 1)$, then the curve complex is given by Scharlemann in \cite{Sc} as follows: Let $a$ and $b$ be as in
Figure \ref{figure2}. We see that $i([a], [b]) = 1$. The vertex set of the curve complex is $\{[a], t_a^m ([b]) : m \in \mathbb{Z} \}$,
where $t_a$ is the Dehn twist about the 2-sided curve $a$. The complex is shown in Figure \ref{figure2}.
We see that $a$ is 2-sided, and each element in $\{t_a^m ([b]) : m \in \mathbb{Z} \}$ has a 1-sided representative. So,
$\lambda$ fixes $[a]$ and sends $\{t_a^m ([b]) : m \in \mathbb{Z} \}$ to itself by Lemma \ref{easy}.
For any $m$, the elements $t_a^m ([b])$ and $t_a^{m+1} ([b])$ cannot be send to the same element by $\lambda$ since they have different intersection information with $t_a^{m-1} ([b])$, and $\lambda$ preserves geometric intersection zero and nonzero by definition. Similarly, $t_a^m ([b])$ and $t_a^{n} ([b])$ cannot be send to the same element by $\lambda$ for any $m, n$ with $m \neq n$. Vertices connected by an edge go to vertices connected by an edge. So, either $\lambda(t_a^m ([b])) = t_a^k ([b])$,
$\lambda(t_a^{m-1} ([b]))= t_a^{k-1} ([b])$, $\lambda(t_a^{m+1} ([b]))= t_a^{k+1} ([b])$ or $\lambda(t_a^m ([b])) = t_a^k ([b])$,
$\lambda(t_a^{m-1} ([b]))= t_a^{k+1} ([b])$, $\lambda(t_a^{m+1} ([b]))= t_a^{k-1} ([b])$ for some $k \in \mathbb{Z}$. By cutting $N$ along $a$ we get a cylinder with one puncture as shown in Figure \ref{figure2a}. There is a reflection of the cylinder
interchanging front face with the back face which fixes $b$ pointwise. This gives us a homeomorphism $r$ of $N$ such that $r(b) = b$ and $r(a) = a^{-1}$. So, $r_\#([b]) = [b]$ and $r_\#(t_a([b])) = {t_a}^{-1}([b])$. The map $r_\#$ reflects the graph in Figure \ref{figure2} at $[b]$ and fixes $[a]$. Now it is easy to see that $\lambda$ is induced by $t_a^k$ or $t_a^k \circ r$ for some $k \in \mathbb{Z}$.
\end{proof}

\begin{figure}
\begin{center}
\epsfxsize=1.6in \epsfbox{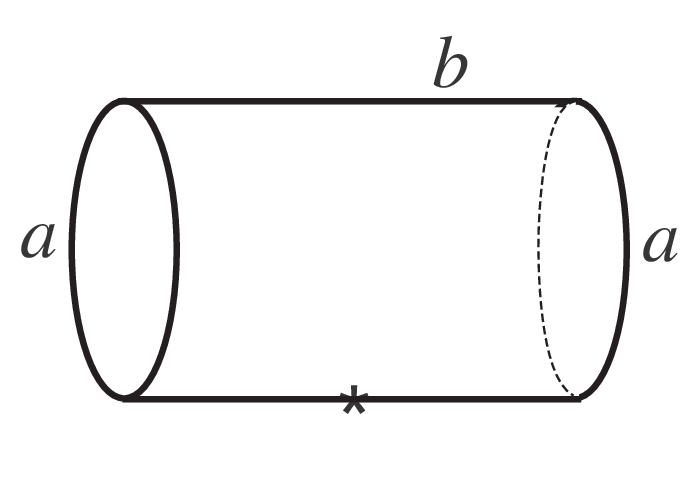}
\caption{Cutting along $a$}
\label{figure2a}
\end{center}
\end{figure}

\section{Properties of Superinjective Simplicial Maps}


Let $N$ be a compact, connected, nonorientable surface of genus $g$ with $n$ boundary components. We will list some properties of superinjective simplicial maps. First we give some definitions.

Let $P$ be a set of pairwise disjoint, nonisotopic, nontrivial simple closed curves on $N$. $P$ is called a {\it pair of pants decomposition}
of $N$ if each component $\Delta$ of the surface $N_P$, obtained by cutting $N$ along $P$, is a pair of pants. A pair of pants of a
pants decomposition is the image of one of these connected components under the quotient map $q: N_P \rightarrow N$.
Let $a$ and $b$ be two distinct elements in a pair of pants decomposition $P$. Then $a$ is called {\it adjacent} to $b$ w.r.t. $P$ iff
there exists a pair of pants in $P$ which has $a$ and $b$ on its boundary.

Let $P$ be a pair of pants decomposition of $N$. Let $[P]$ be the set of isotopy classes of elements of $P$.
Note that $[P]$ is a maximal simplex of $\mathcal{C}(N)$. Every maximal simplex $\sigma$ of $\mathcal{C}(N)$
is equal to $[P]$ for some pair of pants decomposition $P$ of $N$.

On orientable surfaces all maximal simplices have the same dimension $3g + n -4$ where $g$ is the genus and
$n$ is the number of boundary components of the orientable surface. On nonorientable surfaces this is not
the case. There are different dimensional maximal simplices. In Figure \ref{Fig0} we see some of them in
$g=7$ case. In the figure we see cross signs. This means that the interiors of the disks with cross signs
inside are removed and the antipodal points on the resulting boundary components are identified.

The following lemma is given in \cite{A} and \cite{AK}:

\begin{figure}
\begin{center}
\epsfxsize=2in \epsfbox{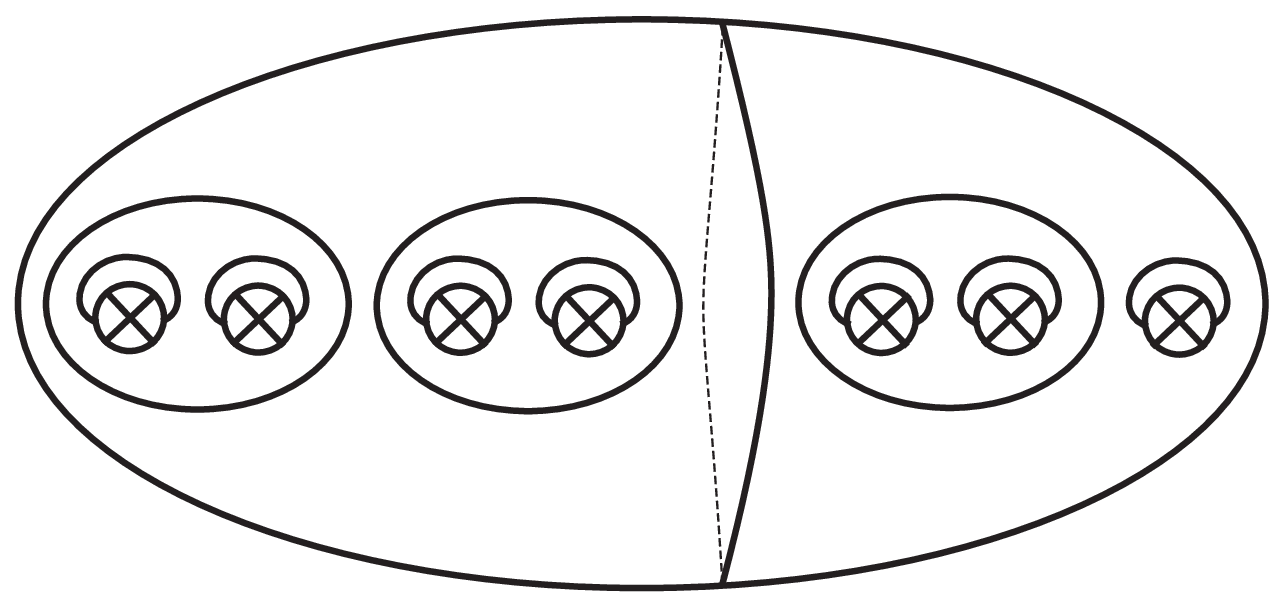} \hspace{0.1in} \epsfxsize=2in
\epsfbox{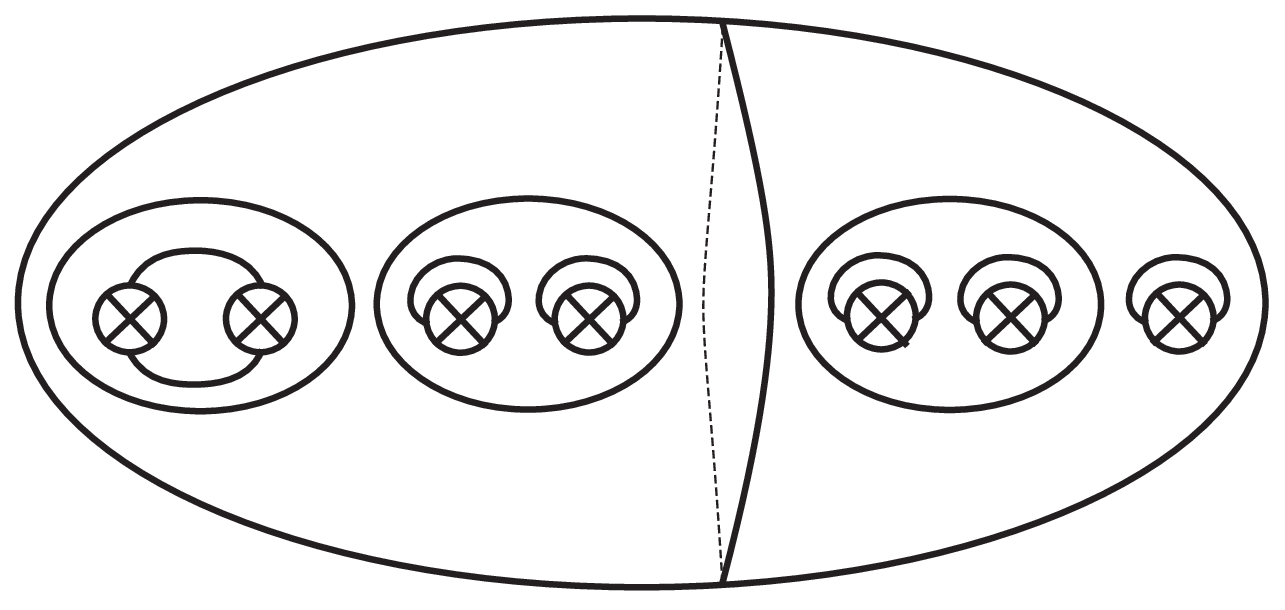} \hspace{0.1in} \epsfxsize=2in \epsfbox{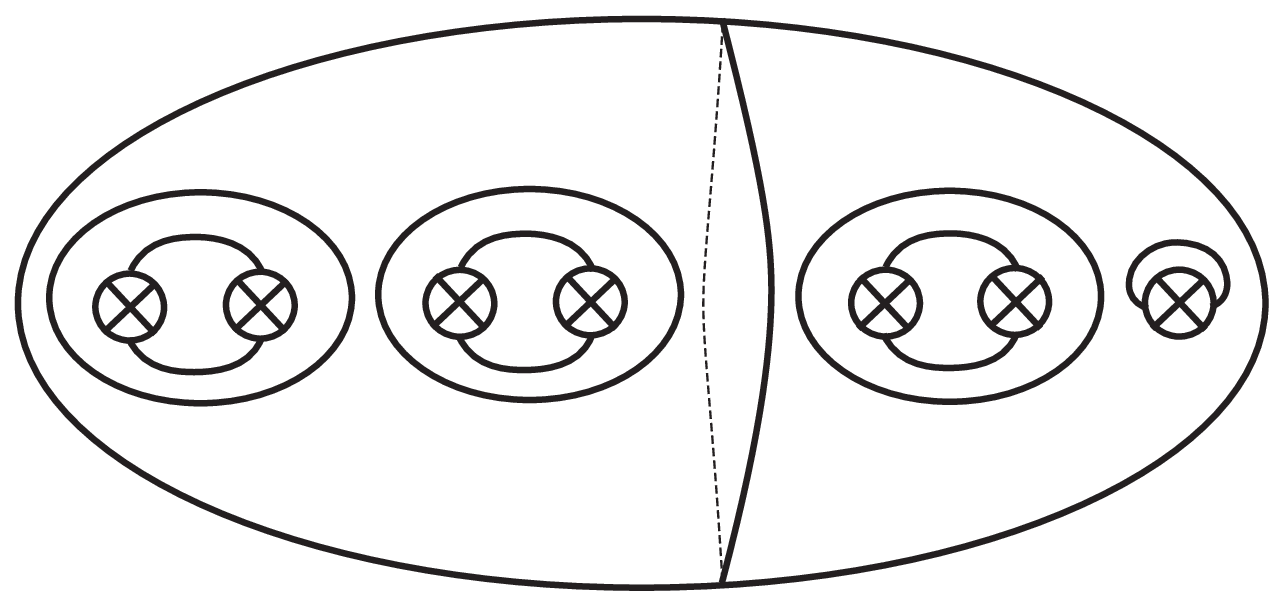}
\caption{Some maximal simplices on genus 7}
\label{Fig0}
\end{center}
\end{figure}

\begin{lemma}
\label{dim} Let $N$ be a nonorientable surface of genus $g \geq 2$ with $n$ boundary components. Suppose that $(g, n) \neq (2, 0)$.
Let $a_r= 3r+n-2$ and $b_r = 4r +n -2$ if $g = 2r+1$, and let $a_r= 3r+n-4$ and $b_r= 4r +n -4$ if $g =2r$.
Then there is a maximal simplex of dimension $q$ in $\mathcal{C}(N)$ if and only if $a_r \leq q \leq b_r$.\end{lemma}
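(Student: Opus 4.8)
The plan is to translate the statement about dimensions of maximal simplices into an arithmetic statement about the number of one-sided curves in a pants decomposition, and then to settle the range of that number. Given a pants decomposition $P$ of $N$, let $s$ and $t$ denote the numbers of one-sided and two-sided curves in $P$, and let $k$ be the number of pairs of pants obtained by cutting $N$ along $P$. Since cutting along a simple closed curve does not change the Euler characteristic and each pair of pants has Euler characteristic $-1$, I would first record that $k = g + n - 2$. Next I would count the boundary circles of the cut surface in two ways: the $k$ pairs of pants contribute $3k$ boundary circles, while these circles arise from the $n$ original boundary components of $N$, from the $2t$ circles produced by cutting the two-sided curves, and from the $s$ circles produced by cutting the one-sided curves (cutting a one-sided curve, whose neighborhood is a Möbius band, yields a single boundary circle). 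This gives $3k = n + 2t + s$, hence $2t + s = 3g + 2n - 6$. Because the dimension of the simplex $[P]$ is $t + s - 1$, eliminating $t$ yields $\dim [P] = (3g + 2n - 8 + s)/2$; in particular the dimension is a strictly increasing function of $s$, it increases by exactly $1$ when $s$ increases by $2$, and integrality of $t$ forces $s \equiv g \pmod 2$.

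It then remains to determine which values of $s$ actually occur. The bound $s \ge 0$ is trivial, and combined with the parity constraint it already gives $s \ge 1$ when $g$ is odd and $s \ge 0$ when $g$ is even, so I set $s_{\min} = 1$ for odd $g$ and $s_{\min} = 0$ for even $g$. For the upper bound I would use the standard fact (as in \cite{A} and \cite{AK}) that cutting $N$ along a one-sided curve produces a surface of nonorientable genus one smaller with one additional boundary component; cutting simultaneously along the $s$ disjoint one-sided curves of $P$ removes $s$ crosscaps and yields a surface of nonorientable genus $g - s$, whence $g - s \ge 0$, i.e. $s \le g$. Substituting the extreme admissible values into the dimension formula reproduces the stated bounds: $s = g$ gives $b_r$ and $s = s_{\min}$ gives $a_r$, in each of the two parity cases. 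Since the dimension changes by $1$ as $s$ runs through the admissible values in steps of $2$, this shows that no dimension outside $[a_r, b_r]$ can occur, and that every dimension in this interval is attained provided all intermediate values of $s$ are realized.

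The main work, and the step I expect to be the principal obstacle, is this realizability: constructing, for each $s$ of the correct parity with $s_{\min} \le s \le g$, an actual pants decomposition of $N$ having exactly $s$ one-sided curves. I would start from the decomposition with the maximal value $s = g$, obtained by choosing $g$ disjoint one-sided curves, one in each crosscap, and completing the resulting orientable planar surface by two-sided curves. To decrease $s$, I would exploit a local move supported on a subsurface homeomorphic to a once-holed Klein bottle (the case $(g,n) = (2,1)$): as computed in Section 2, such a piece admits both a decomposition with two one-sided curves and a decomposition with a single two-sided curve. Replacing the former by the latter inside $N$ lowers the global count $s$ by $2$ while keeping $P$ a pants decomposition, and iterating this move drives $s$ down through every admissible value to $s_{\min}$. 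The care needed here is in verifying that the relevant one-sided curves remain disjoint, nonisotopic, and one-sided throughout, and that the local replacement genuinely stays inside a pants decomposition; for these structural facts I would appeal to the constructions in \cite{A} and \cite{AK}. An alternative to the explicit local move would be an induction on the genus, attaching a crosscap or a handle at each stage, but the local Klein-bottle swap seems the most transparent route.
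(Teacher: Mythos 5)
The paper contains no proof of Lemma \ref{dim} to compare against: it is quoted verbatim from \cite{A} and \cite{AK}, so your argument has to stand on its own, and it does. The bookkeeping is correct: $k=g+n-2$ from Euler characteristic, the boundary count $3k=n+2t+s$ (one circle from each one-sided curve, two from each two-sided curve), hence $2t+s=3g+2n-6$ and $\dim[P]=(3g+2n-8+s)/2$ with the forced parity $s\equiv g \pmod 2$; substituting $s=g$ and $s=s_{\min}$ reproduces $b_r$ and $a_r$ in both parity cases, and since consecutive admissible values of $s$ differ by $2$ the dimensions sweep out exactly the integer interval $[a_r,b_r]$ once realizability is settled. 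Your realizability scheme also works, and your instinct that it is the main content is right: starting from $g$ crosscap cores with planar complement, and trading two one-sided curves whose double covers cobound a pair of pants for the single two-sided curve of the resulting once-holed Klein bottle (this is precisely the $(g,n)=(2,1)$ piece analyzed in Theorem \ref{A}: cutting along $a$ there yields a single pair of pants) replaces $(s,t)$ by $(s-2,t+1)$ and lowers the dimension by one; if you pair the crosscaps from the start, the swaps are independent and every admissible $s$ occurs. Two caveats are worth recording. First, your entire translation rests on the correspondence between maximal simplices and pants decompositions, which the paper asserts just before the lemma without proof, and which genuinely fails for very small surfaces: on the closed Klein bottle $(g,n)=(2,0)$, which satisfies the stated hypothesis $g\geq 2$, the two disjoint one-sided curves $b,c$ of Figure \ref{figure1}(ii) span a maximal $1$-simplex even though $b_r=0$ --- cutting along them yields an annulus, not pants (indeed $k=g+n-2=0$ there). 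So the lemma, and your proof of it, are really valid only under the size hypotheses in force where the paper applies it, namely $(g,n)=(3,0)$ or $g+n\geq 4$. Second, your phrase ``yields a surface of nonorientable genus $g-s$'' is not literally correct, since the complement of the $s$ one-sided curves can be orientable (e.g.\ $g=3$, $s=1$, complement a once-holed torus, and always when $s=g$); the inequality $s\leq g$ survives because an orientable complement of genus $h$ forces $g=2h+s\geq s$, but the justification should be stated that way rather than via a genus-drop-by-one count.
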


Now we prove some properties of superinjective simplicial maps.

\begin{lemma}
\label{inj} Suppose that $(g, n)= (3, 0)$ or $g + n \geq 4$. A superinjective simplicial map
$\lambda : \mathcal{C}(N) \rightarrow \mathcal{C}(N)$ is injective.
\end{lemma}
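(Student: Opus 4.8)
The plan is to reduce everything to the statement that $\lambda$ is injective on vertices, since a simplicial map is determined by its action on vertices and is injective precisely when its vertex map is. So I would take two distinct vertices $[a] \neq [b]$ of $\mathcal{C}(N)$ and aim to show $\lambda([a]) \neq \lambda([b])$. First I would dispose of the easy case: if $i([a],[b]) \neq 0$, then superinjectivity gives $i(\lambda([a]), \lambda([b])) \neq 0$, and since any vertex has self-intersection $0$, this forces $\lambda([a]) \neq \lambda([b])$. Thus the only substantial case is $i([a],[b]) = 0$ with $[a] \neq [b]$, where I would argue by contradiction, assuming $\lambda([a]) = \lambda([b])$.

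The core of the argument is a separation principle: for distinct disjoint vertices $[a],[b]$ there is a vertex $[c]$ with $i([c],[a]) = 0$ and $i([c],[b]) \neq 0$. Granting such a $[c]$, simpliciality applied to the disjoint pair $c,a$ gives $i(\lambda([c]),\lambda([a])) = 0$ (the image of the edge is a simplex, so $\lambda([c])$ equals or is disjoint from $\lambda([a])$), while superinjectivity applied to the intersecting pair $c,b$ gives $i(\lambda([c]),\lambda([b])) \neq 0$. Under the assumption $\lambda([a]) = \lambda([b])$ these two statements contradict each other, completing the reduction.

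It remains to produce the distinguishing vertex $[c]$, and this is the step I expect to be the main obstacle, as it is where the hypothesis $(g,n) = (3,0)$ or $g+n \geq 4$ is used. I would represent $[a]$ and $[b]$ by disjoint curves and cut $N$ along $a$. The curve $b$ then lies in a single component $N'$ of the cut surface $N_a$, and since $b$ is essential and non-peripheral in $N$ and is not isotopic to $a$, it remains essential and non-peripheral in $N'$. It then suffices to find an essential, non-peripheral simple closed curve $c$ in $N'$ (automatically disjoint from $a$, and a vertex of $\mathcal{C}(N)$) with $i(c,b) \neq 0$ inside $N'$. Such a $c$ exists whenever $N'$ is not one of the degenerate pieces (a disk, a M\"obius band, an annulus, a pair of pants, or a low-complexity one-holed surface on which $b$ admits no transverse essential curve).

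The genuine work lies in the case analysis over the topological type of $a$ --- one-sided versus two-sided, separating versus nonseparating --- and over the resulting components of $N_a$, verifying in each configuration that the complexity bound $(g,n)=(3,0)$ or $g+n\geq 4$ rules out all degenerate components $N'$. The excluded surfaces, on which this separation principle can fail, are exactly the small cases $(g,n)\in\{(1,0),(1,1),(1,2),(2,0),(2,1)\}$ already treated by hand in Theorem \ref{A}, which is why the hypothesis here excludes them.
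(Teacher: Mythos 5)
Your proposal is correct and follows essentially the same route as the paper: the paper's proof is exactly your two-case argument, disposing of $i([a],[b])\neq 0$ by superinjectivity and, in the disjoint case, choosing a vertex $[c]$ with $i([c],[a])=0$ and $i([c],[b])\neq 0$ so that simpliciality and superinjectivity force $\lambda([a])\neq\lambda([b])$. The paper merely asserts the existence of such a $[c]$ without the cut-along-$a$ case analysis you sketch, so your write-up is, if anything, more detailed than the original at that step (your parenthetical that $c$ is automatically a vertex of $\mathcal{C}(N)$ does require checking that a M\"obius band or annulus bounded by $c$ in $N$ would have to contain $a$ and thus make $c$ peripheral or isotopic to $a$ in the cut surface, but this is routine).
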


\begin{proof} Let $[a]$ and $[b]$ be two distinct vertices in $\mathcal{C}(N)$. If one of them is 1-sided and the other is 2-sided, then
$\lambda$ cannot send them to the same element by Lemma \ref{easy}. Suppose $a$ and $b$ are both 2-sided. Then both $\lambda([a])$ and $\lambda([b])$ are the isotopy classes of 2-sided curves by Lemma \ref{easy}. If $i([a], [b]) \neq 0$, then
$i(\lambda([a]), \lambda([b])) \neq 0$, since $\lambda$ is superinjective. This implies that $\lambda([a]) \neq \lambda([b])$ as both are isotopy classes of 2-sided curves. If $i([a], [b]) = 0$, we choose a vertex $[c]$ of $\mathcal{C}(N)$ such that  $i([a], [c]) = 0$, and  $i([b], [c]) \neq 0$. Then $i(\lambda([a]), \lambda([c])) = 0$, $i(\lambda([b]), \lambda([c])) \neq 0$. So, $\lambda([a]) \neq \lambda([b])$.
Now suppose $a$ and $b$ are both 1-sided. Then both $\lambda([a])$ and $\lambda([b])$ are the isotopy classes of 1-sided curves by Lemma \ref{easy}.
We choose a vertex $[c]$ of $\mathcal{C}(N)$ such that  $i([a], [c]) = 0$, and  $i([b], [c]) \neq 0$ (we note that this choice of $c$ is not possible only in the case when $(g, n)=(1, 2)$, but we do not have that case in our statement). Then $i(\lambda([a]), \lambda([c])) = 0$, $i(\lambda([b]), \lambda([c])) \neq 0$. So, $\lambda([a]) \neq \lambda([b])$. Hence, $\lambda$ is injective.\end{proof}\\

If a maximal simplex has dimension $b_r = 4r +n -2$ if $g = 2r+1$, or $b_r = 4r +n -4$ if $g=2r$, we will call
it a {\it top dimensional maximal simplex}. Since a superinjective map is injective when $(g, n)= (3, 0)$ or $g + n \geq 4$,
it sends top dimensional maximal simplices to top dimensional maximal simplices.
In the following lemmas we will see that adjacency and nonadjacency are preserved w.r.t. top dimensional maximal simplices.

\begin{lemma}
\label{adjacent} Suppose that $(g, n)= (3, 0)$ or $g + n \geq 4$.
Let $\lambda : \mathcal{C}(N) \rightarrow \mathcal{C}(N)$ be a superinjective simplicial map.
Let $P$ be a pair of pants decomposition on $N$ which corresponds to a top dimensional maximal
simplex in $\mathcal{C}(N)$. Let $a, b \in P$ such that $a$ is adjacent to $b$ w.r.t.
$P$. There exists $a'  \in \lambda([a])$ and $b'  \in \lambda([b])$ such that $a'$ is adjacent
to $b'$ w.r.t. $P'$ where $P'$ is a set of pairwise disjoint curves representing $\lambda([P])$ containing $a', b'$.
\end{lemma}

\begin{proof} Suppose that $(g, n)= (3, 0)$ or $g + n \geq 4$.
Let $P$ be a pair of pants decomposition on $N$ which corresponds to a top
dimensional maximal simplex in $\mathcal{C}(N)$. The statement is easy to see in $(g, n)= (3, 0)$ and $(g, n) = (1,3)$ cases as
there are only three curves in $P$ if $(g, n)= (3, 0)$, and
there are only two curves in $P$ if $(g, n) = (1,3)$.

Assume that $(g, n) \neq (3, 0)$ and $(g, n) \neq (1,3)$. Let $a, b \in P$ such that $a$ is adjacent
to $b$ w.r.t. $P$. We can find a simple closed curve $c$ on $N$ such
that $c$ intersects only $a$ and $b$ nontrivially (with nonzero geometric intersection) and
$c$ is disjoint from all the other curves in $P$.
Let $P'$ be a set of pairwise disjoint curves representing $\lambda([P])$. Since $\lambda$ is
injective by Lemma \ref{inj}, $\lambda$ sends top dimensional maximal simplices to top
dimensional maximal simplices. So, $P'$ corresponds to a top dimensional maximal simplex.
Assume that $\lambda([a])$ and $\lambda([b])$ do not have adjacent representatives w.r.t. $P'$.
Since $i([c], [a]) \neq 0$ and $i([c], [b]) \neq 0$, we have $i(\lambda([c]), \lambda([a])) \neq 0$ and
$i(\lambda([c]), \lambda([b])) \neq 0$ by superinjectivity. Since  $i([c], [e]) = 0$ for all
$e \in P \setminus \{a, b\}$, we have $i(\lambda([c]), \lambda([e])) = 0$ for all
$e \in P \setminus \{a, b\}$. But this is not possible because $\lambda([c])$ has to intersect
geometrically essentially with some isotopy class other than $\lambda([a])$ and $\lambda([b])$ in $\lambda([P])$
to be able to make essential intersections with $\lambda([a])$ and $\lambda([b])$ since
$\lambda([P])$ is a top dimensional maximal simplex. This gives a contradiction to the assumption
that $\lambda([a])$ and $\lambda([b])$ do not have adjacent representatives.
\end{proof}

\begin{lemma}
\label{nonadjacent} Suppose that $g + n \geq 4$. Let $\lambda : \mathcal{C}(N) \rightarrow \mathcal{C}(N)$ be
a superinjective simplicial map. Let $P$ be a pair of pants decomposition on $N$ which corresponds to a top
dimensional maximal simplex in $\mathcal{C}(N)$. Let $a, b \in P$ such that $a$ is not adjacent to $b$
w.r.t. $P$. There exists $a'  \in \lambda([a])$ and $b'  \in \lambda([b])$ such that $a'$ is not adjacent
to $b'$ w.r.t. $P'$ where $P'$ is a set of pairwise disjoint curves representing $\lambda([P])$ containing $a', b'$.
\end{lemma}

\begin{proof} Suppose that $g + n \geq 4$. Let $P$ be a pair of pants decomposition on $N$ which corresponds
to a top dimensional maximal simplex in $\mathcal{C}(N)$. Let $a, b \in P$ such that $a$ is not adjacent to $b$
w.r.t. $P$. Let $P'$ be a set of pairwise disjoint curves representing $\lambda([P])$.
We can find distinct simple closed curves $c$ and $d$ on $N$ such that $a, b, c, d$ are pairwise nonisotopic, $c$ intersects only $a$
nontrivially and is disjoint from all the other curves in $P$, $d$ intersects only $b$ nontrivially and is
disjoint from all the other curves in $P$, and $c$ and $d$ are disjoint. Since $\lambda$ is injective
by Lemma \ref{inj}, $\lambda$ sends top dimensional maximal simplices to top dimensional maximal simplices, and
$\lambda([a]), \lambda([b]), \lambda([c]), \lambda([d])$ are pairwise distinct. We also have  $i(\lambda([c]), \lambda([a])) \neq 0$, $i(\lambda([c]), \lambda([x]) = 0$ for all $x \in P \setminus \{a\}$,
$i(\lambda([d]), \lambda([b])) \neq 0$, $i(\lambda([d]), \lambda([x])) = 0$ for all $x \in P \setminus \{b\}$, and
$i(\lambda([c]), \lambda([d])) = 0$ by superinjectivity. This is possible
only when $\lambda([a])$ and $\lambda([b])$ have representatives which are not adjacent w.r.t. $P'$.\end{proof}

\begin{lemma}
\label{1-sided-cn} Let $g \geq 2$. Suppose that $(g, n) = (3, 0)$ or $g+n \geq 4$.
Let $\lambda : \mathcal{C}(N) \rightarrow \mathcal{C}(N)$ be a superinjective simplicial map.
If $a$ is a 1-sided simple closed curve on $N$ whose complement is
nonorientable, then $\lambda(a)$ is the isotopy class of a 1-sided simple closed curve whose complement
is nonorientable.
\end{lemma}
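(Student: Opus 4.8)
The plan is to take a representative $a' \in \lambda(a)$ and to exclude, one at a time, the three ways in which $a'$ could fail the conclusion: that $a'$ is two-sided and nonseparating, that $a'$ is two-sided and separating, or that $a'$ is one-sided with orientable complement. Since $g \geq 2$, cutting along the one-sided curve $a$ yields a \emph{connected} nonorientable surface of genus $g-1$ with $n+1$ boundary components, so $a$ can be completed to a pants decomposition $P$ realizing a top dimensional maximal simplex (one in which the maximal possible number, namely $g$, of disjoint one-sided curves occurs). By Lemma \ref{inj} the map $\lambda$ is injective, hence it carries the top dimensional simplex $[P]$ to a top dimensional simplex $[P']$ with $a' \in P'$; in particular $P' \setminus \{a'\}$ is a pants decomposition of the complement of $a'$ with exactly one fewer curve than a top dimensional pants decomposition of $N$.

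First I would dispose of the one-sided-orientable and two-sided-nonseparating cases by a counting argument based on Lemma \ref{dim}. A top dimensional pants decomposition of a nonorientable surface of genus $g$ with $n$ boundary components has exactly $2g+n-3$ curves; more generally a pants decomposition of a nonorientable surface of genus $g'$ with $n'$ boundary components has at most $2g'+n'-3$ curves, and of an orientable surface of genus $h$ with $m$ boundary components exactly $3h+m-3$ curves. Since $P'\setminus\{a'\}$ must be a pants decomposition of the complement of $a'$ with the required $2g+n-4$ curves, I would apply these bounds to that complement for each candidate type of $a'$: the complement has genus $g-1$ with $n+1$ boundaries if $a'$ is one-sided with nonorientable complement, is orientable if $a'$ is one-sided with orientable complement, and has genus $g-2$ with $n+2$ boundaries (or is orientable) if $a'$ is two-sided nonseparating. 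A direct check shows that in the one-sided-orientable and two-sided-nonseparating cases the maximal number of curves available is strictly smaller than $2g+n-4$, which is impossible. This excludes both of those types, leaving the one-sided-nonorientable case (the desired one) and the separating cases.

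To exclude a separating $a'$ I would exploit that superinjectivity preserves the following relation: say $u$ and $w$ are \emph{linked away from} $v$ if there is a chain of vertices from $u$ to $w$, each disjoint from $v$, with consecutive vertices intersecting essentially. Indeed $\lambda$ preserves disjointness from $v$ and, by superinjectivity, essential intersection along the chain, so it carries this relation for $v$ to the same relation for $\lambda(v)$. Because the complement of the one-sided curve $a$ is connected and nonorientable, all curves disjoint from $a$ lie in a single such class, and in particular the curves of $P\setminus\{a\}$ are all linked away from $a$ through auxiliary curves; hence their images $P'\setminus\{a'\}$ are all linked away from $a'$. If $a'$ separated $N$ into two pieces each carrying an essential curve, then $P'\setminus\{a'\}$ would meet both pieces and could not all lie in one class away from $a'$, a contradiction. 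This rules out every separating $a'$ except one bounding a pair of pants whose two remaining boundary circles are boundary components of $N$, whose planar side carries no vertex of $\mathcal{C}(N)$ and is therefore invisible to this argument.

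The remaining, and most delicate, possibility is that $a'$ is two-sided and bounds a pair of pants containing two boundary components of $N$; this is the main obstacle, since it survives both the dimension count and the connectivity argument. Here I would argue exactly as in Lemma \ref{1-sided-co}: I would choose the pair of pants of $P$ adjacent to $a$ so that its two other boundaries are curves $x_1,x_2 \in P$, so that by Lemmas \ref{adjacent} and \ref{nonadjacent} the adjacency pattern of $P'$ around $a'$ is forced; then I would introduce two curves $z,t$ with $[z]\neq[t]$, each disjoint from $a$ and meeting an adjacent curve essentially, chosen so that the forced picture of $P'$ confines their images $z',t'$ to a projective plane with two boundary components surrounding some curve of $P'$. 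By Scharlemann's description of that surface in \cite{Sc}, at most one isotopy class meets the given curve essentially there, forcing $[z']=[t']$ and contradicting injectivity; tracking the exact configuration for the various values of $(g,n)$ is the technical heart of this step. With all alternatives excluded, $a'$ is one-sided with nonorientable complement, as claimed.
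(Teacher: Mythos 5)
Your overall strategy is sound, and its middle step is genuinely different from the paper's. The exclusion of the one-sided-with-orientable-complement and two-sided-nonseparating types is the paper's own argument (Cases 1 and 2 of its proof): complete $a$ to a top dimensional pants decomposition, use injectivity (Lemma \ref{inj}) to see that the image simplex is again top dimensional, and compare against the bounds from Lemma \ref{dim} applied to the complement of $a'$ --- your curve counts $2g'+n'-3$ and $3h+m-3$ are exactly those dimension bounds restated, and your arithmetic checks out. Where you diverge is the separating case. The paper has no analogue of your ``linked away from $v$'' relation; instead it deduces from Lemmas \ref{adjacent} and \ref{nonadjacent}, through a lengthy case analysis ($(4,0)$ and $(3,1)$ by adjacency counts, $g \geq 4$ via a regular neighborhood of $a \cup b$ for a dual curve $b$, then $(2,3)$, $(2,n>3)$, $(3,2)$, $(3,n>2)$ separately), that a separating $a'$ would have to cut off a pair of pants meeting $\partial N$ twice, and only then derives a contradiction. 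Your chain argument reaches that same residual configuration in one stroke, and it is correct: $\lambda$ preserves disjointness from $v$ and nonzero intersection along the chain, a chain of curves disjoint from $a'$ cannot jump between the two sides of $a'$, and the only complementary piece carrying no vertex of $\mathcal{C}(N)$ is a pair of pants with two boundary circles on $\partial N$. This buys real economy: it instantly settles every case with $n \leq 1$ (where no such pair of pants exists), including $(4,0)$ and $(3,1)$, which the paper must treat separately. One thing you use silently: that any two essential curves in the connected cut surface $N_a$ are joined by a chain of essential curves in $N_a$ with consecutive nonzero intersection. This is true but needs justification, particularly for low-complexity complements such as genus $1$ with three holes.

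The genuine shortfall is your last step. Saying you would ``argue exactly as in Lemma \ref{1-sided-co}'' and that tracking the configurations ``is the technical heart'' is a placeholder exactly where the paper spends most of its effort: its subcases (a)--(e) each require a specific pants decomposition, a forced picture of $P'$, and a tailored choice of the curves $z, t$ before Scharlemann's classification of curves in the twice-holed projective plane yields $[z'] = [t']$ against injectivity. Two observations would let you close this honestly. First, for $g \geq 4$ you do not need the $z,t$ device at all: once $a'$ is pinned to the pants configuration, the paper finishes by a pure count, comparing the maximal simplex through $a, x, y$ (boundary curves of a neighborhood of $a$ and a dual one-sided curve $b$) with the Lemma \ref{dim} bound on simplices through $a', x', y'$, whose four-holed-sphere complement has smaller capacity. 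Second, that reduces the configuration-tracking to $g \in \{2,3\}$ with $n \geq 2$, where the paper's explicit choices of $z, t$ can be borrowed essentially verbatim. As written, your proposal is a correct plan with a cleaner middle than the paper's, but it is not yet a complete proof of the residual case.
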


\begin{proof} Let $a$ be a 1-sided simple closed curve on $N$ whose complement is nonorientable.
Let $a' \in \lambda([a])$. By Lemma \ref{easy} we know that $a'$ is a 1-sided simple closed curve. Suppose the complement of $a'$ is
orientable. This case happens only if the genus of $N$ is odd. So, suppose $g =2r +1$, where $r \geq 1$.
In this case $a$ can be put into a maximal simplex $\Delta$ of dimension $4r + n -2$.
Since $\lambda$ is injective by Lemma \ref{inj}, $\lambda(\Delta)$ is a simplex of dimension $4r + n-2$.
By using Euler characteristic arguments we see that the complement of $a'$ has genus $r$ and $n+1$
boundary components. So, in the complement of $a'$ there can be at most a $3r + n - 3$ dimensional simplex,
hence there can be at most a $3r + n - 2$ dimensional simplex containing $a'$ on $N$.  Since
dim $\lambda(\Delta) = 4r + n - 2 > 3r + n - 2$ as $r \geq 1$, we get a contradiction.
So, the complement of $a'$ is nonorientable.
\end{proof}

\section{Proof of the Main Result}

In this section we will prove our main result when $(g, n) = (3, 0)$ or $g + n \geq 5$. Together with Theorem \ref{A}
this will complete our proof of the main theorem.\\

First we consider two graphs on $N$ as given in \cite{AK}:
If $g=1$, let $\mathcal{A}$ be the set of isotopy classes of all 1-sided simple closed curves on $N$. If $g \geq 2$ let
$\mathcal{A}$ be the set of isotopy classes of all 1-sided simple closed curves which have nonorientable complements on $N$.
Let $X(N)$ be the graph with vertex set $\mathcal{A}$ such that two distinct vertices in $X(N)$ are connected by an edge if
and only if they have representatives intersecting transversely at one point.

Let $\widetilde{X}(N)$ be a subgraph of $X(N)$ with the vertex set $\mathcal{A}$. Two distinct vertices $\alpha$ and $\beta$
are connected by an edge in $\widetilde{X}(N)$ if $\alpha$ and $\beta$ have representatives a and b intersecting transversely at
one point such that

(i) either $g \geq 4$ and the surface $N_{a \cup b}$ obtained by cutting $N$ along $a$
and $b$ is connected (Since $N_a$ and $N_b$ are nonorientable, it is easy to
see that $N_{a \cup b}$ is also nonorientable in this case.),

(ii) or $1 \leq g \leq 3$ and the Euler characteristic of one of the connected
components of $N_{a \cup b}$ is at most $-2$.

We will use some connectivity results about these graphs given by Atalan-Korkmaz in \cite{AK}.\\

If $a$ is a nontrivial simple closed curve on $N$, {\it the link}, $L_a$ of $a$, is defined as the full subcomplex spanned by all
the vertices of $\mathcal{C}(N)$ which have representatives disjoint from $a$.
{\it The star} $St_a$ of $a$ is defined as the
subcomplex of $\mathcal{C}(N)$ consisting of all simplices in $\mathcal{C}(N)$ containing $[a]$ and
the faces of all such simplices. So, the link $L_a$ of $a$ is the subcomplex of
$\mathcal{C}(N)$ whose simplices are those simplices of $St_a$ which do not contain $[a]$. Let $b$ be a nontrivial
simple closed curve which is not isotopic to $a$. Then $b$ is called
{\it dual} to $a$ if they intersect transversely only once. Let $D_a$ be the set of isotopy classes of nontrivial simple closed
curves that are dual to $a$ on $N$.\\

The following theorem is given by Atalan-Korkmaz in \cite{AK}.

\begin{theorem}
\label{id} Let $N$ be a connected, nonorientable surface of genus $g \geq 1$ with $n$ holes, and $g + n \geq 5$. Let $a$
and $b$ be two 1-sided simple closed curves that are dual to each other such that $[a]$ and $[b]$ are two vertices in
$\widetilde{X}(N)$ which are connected by an edge in $\widetilde{X}(N)$. Let $h$ be a mapping class. If $h(\gamma) = \gamma$ for
every vertex $\gamma$ in the set $(St_a \cup D_a) \cap (St_b \cup D_b)$, then $h$ is the identity.
\end{theorem}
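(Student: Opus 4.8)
The plan is to show that $h$ fixes the isotopy classes of a collection of curves that fills $N$, to reduce the problem to a small neighborhood of $a \cup b$, and there to rule out any nontrivial residual symmetry.

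First I would determine which classes are forced to be fixed. Since $a$ and $b$ are dual, $a$ is a $1$-sided curve meeting $b$ transversely once, so $[a] \in D_b$; as also $[a] \in St_a$, we get $[a] \in (St_a \cup D_a) \cap (St_b \cup D_b)$, and symmetrically $[b]$ lies in this set. Hence $h([a]) = [a]$ and $h([b]) = [b]$. Moreover every class $[c]$ with a representative disjoint from both $a$ and $b$ lies in $St_a \cap St_b$, which is contained in the hypothesis set, so $h$ fixes every such $[c]$. Because $h$ fixes $[a]$ and $[b]$, and $a, b$ meet exactly once, I can isotope $h$ so that it preserves $a \cup b$, a regular neighborhood $R$ of $a \cup b$ (a Klein bottle with one hole, since $\chi(a \cup b) = -1$ and both curves are $1$-sided), and the cut surface $N_{a\cup b}$. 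The restriction $\bar h$ to $N_{a\cup b}$ then fixes the isotopy class of every essential simple closed curve of $N_{a\cup b}$, since these correspond exactly to the classes on $N$ disjoint from $a \cup b$.

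Next I would exploit the hypothesis that $[a]$ and $[b]$ are joined by an edge of $\widetilde{X}(N)$: this guarantees that $N_{a\cup b}$ is connected and nonorientable (when $g \geq 4$) or has a complementary piece of Euler characteristic at most $-2$ (when $g \leq 3$), so that, together with $g + n \geq 5$, the surface $N_{a\cup b}$ has enough complexity to fall outside the small exceptional cases. For such a surface a mapping class fixing the isotopy class of every essential simple closed curve is the identity; this rests on the Alexander method, which shows that fixing a filling system of curves forces a finite-order map, and on the absence of exceptional symmetries on a surface of this size, which forces that finite-order map to be trivial. Applying this to $\bar h$, I conclude $\bar h \simeq \mathrm{id}$, so (after the isotopy above) $h$ is supported in the neighborhood $R$ and restricts to the identity on a collar of $\partial R$.

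Finally I would analyze the mapping class $h|_R$. Now $h$ is supported in the Klein bottle with one hole $R$, it fixes $\partial R$, and it fixes $[a]$, $[b]$, together with the $1$-sided curves dual to $a$ (respectively $b$) whose classes survive in the hypothesis set, for instance the members of $D_a$ that are disjoint from $b$ or dual to $b$. The mapping class group of $R$ fixing the boundary is finite and explicitly understood, and a nontrivial element of it would move one of these dual curves; since all of them are fixed by $h$, it follows that $h|_R$, and hence $h$, is the identity. The main obstacle I anticipate is precisely this last combination of steps: verifying that the edge condition in $\widetilde{X}(N)$ and $g + n \geq 5$ really place $N_{a\cup b}$ outside every exceptional case where a nontrivial mapping class acts trivially on the curve complex, and then checking on the small surface $R$ that the specific dual curves retained in $(St_a \cup D_a) \cap (St_b \cup D_b)$ suffice to detect and kill any residual finite-order ambiguity.
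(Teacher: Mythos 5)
You are attempting to prove a statement that the paper itself does not prove: Theorem \ref{id} is quoted from Atalan--Korkmaz \cite{AK}, and the only argument of this kind carried out in the paper is Theorem \ref{id2}, the $(g,n)=(3,0)$ analogue. Measured against that argument, your outline has one outright topological error and one genuine gap. The error: a regular neighborhood $R$ of $a\cup b$, for two dual $1$-sided curves, is \emph{not} a Klein bottle with one hole; it is a projective plane with two boundary components ($\chi=-1$, but with \emph{two} boundary circles -- the plumbing of two M\"obius bands). This is exactly the situation of Figure \ref{figure1}(i), where the two vertices of $\mathcal{C}(N_{1,2})$ are dual $1$-sided curves filling the surface, and the paper states it explicitly twice: ``$T$ is a genus one surface with two boundary components'' in the proof of Theorem \ref{id2}, and ``a real projective plane with two boundary components'' in Claim 1 of the main theorem's proof. (A Klein bottle with one hole is the neighborhood of a \emph{2-sided} curve and a dual $1$-sided curve, as in the $(2,1)$ case of Figure \ref{figure2}.) The mistake is not cosmetic: by Scharlemann \cite{Sc}, the only essential isotopy classes inside $R\cong N_{1,2}$ are $[a]$ and $[b]$ themselves, so the dual curves you propose to use in your last step do not live in $R$ -- every useful element of $D_a$ crosses $\partial R$ -- and your plan to detect the residual mapping class by its action on curves inside the small surface $R$ has nothing to act on; likewise the ``finite, explicitly understood'' mapping class group you invoke is that of the wrong surface, with the wrong boundary count.

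The gap sits in that same final step, and it is precisely where the content of the theorem lies. The dangerous residual class is a reflection-type involution fixing $[a]$, $[b]$, and every class disjoint from $a\cup b$, and it cannot be ruled out by data local to $R$ plus the curve classes of the complement: for $(g,n)=(3,0)$ your scheme's conclusion is actually \emph{false}, since Theorem \ref{id2} shows the hypotheses there only force ``$h$ is the identity or $r$.'' A correct proof for $g+n\geq 5$ must therefore use how the dual curves in the hypothesis set traverse the complement of $R$, and must control orientation and gluing data along the two circles $x,y$ of $\partial R$ -- this is what the paper's proof of Theorem \ref{id2} does via Corollary 4.6 of \cite{K2} (compatibility of the induced orientations on $x$ and $y$), together with an analysis of the twists $t_x$, $t_y$; what replaces their triviality when $g+n\geq 5$ is exactly the point you defer. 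You flag this as ``the main obstacle I anticipate,'' but anticipating it is not resolving it: as written, your argument yields at most ``identity up to a reflection supported near $R$,'' i.e., the $(3,0)$-type conclusion, not Theorem \ref{id}. Two further caveats in the middle step: the rigidity claim ``a mapping class of $N_{a\cup b}$ fixing every curve class is the identity'' requires precise complexity hypotheses (the paper invokes Lemma 7.1 of \cite{AK} for statements of this kind), and for $1\leq g\leq 3$ the edge condition in $\widetilde{X}(N)$ controls only \emph{one} component of $N_{a\cup b}$, so the other components can be small pieces where such rigidity fails or where twists along $\partial R$ are invisible to curves disjoint from $a\cup b$; also, curves essential in $N_{a\cup b}$ do not correspond exactly to classes of $N$ disjoint from $a\cup b$ (some bound M\"obius bands or are boundary-parallel in $N$), and any isotopy of $\bar h$ must be normalized compatibly with the boundary identification before it descends to $N$, as in the ``antipodal points'' normalization in the paper's main proof.
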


\begin{figure}
\begin{center}
\epsfxsize=2.1in \epsfbox{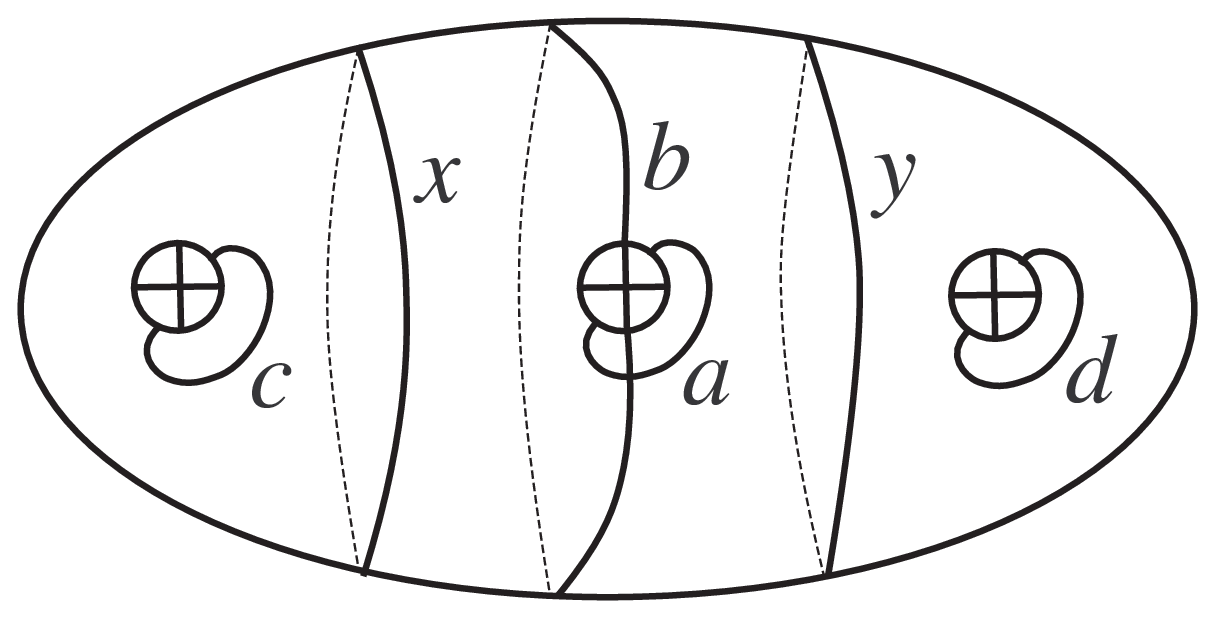} \hspace{0.15in} \epsfxsize=2.1in \epsfbox{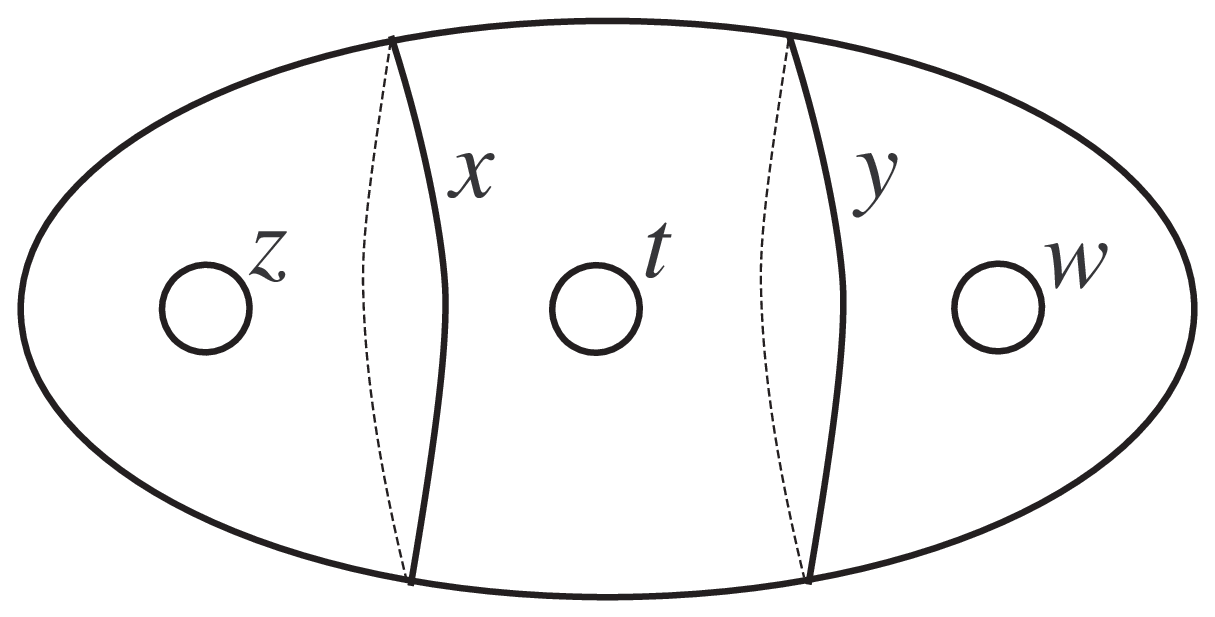}
\caption{(g, n) = (3, 0) case}
\label{Fig(3,0)}
\end{center}
\end{figure}

We prove a similar result below. Suppose $(g, n) = (3, 0)$. When we remove
the identifications on $N$ that are shown in the first part of Figure \ref{Fig(3,0)}, we get a sphere with three boundary
components, $z, t, w$, as shown in the second part. The reflection of this sphere about the $xy$-plane which changes the
orientation on each curve in $\{z, t, w, x, y\}$ shown in the figure induces a homeomorphism $R$ on $N$. We call $R$ a
reflection homeomorphism on $N$. Let $r = [R]$.

\begin{theorem}
\label{id2} Suppose $(g, n)= (3, 0)$. Let $a$, $b$ be two 1-sided simple closed curves with nonorientable complements
such that they are dual to each other. Let $h$ be a mapping class. If $h(\gamma) = \gamma$ for
every vertex $\gamma$ in the set $(St_a \cup D_a) \cap (St_b \cup D_b)$, then $h$ is the identity or $r$.
\end{theorem}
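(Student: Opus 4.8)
The plan is to mimic the strategy behind Theorem \ref{id}, but to carry out the analysis explicitly on the closed genus $3$ nonorientable surface $N$, whose curve complex is small enough to enumerate the relevant vertices by hand. The goal is to show that a mapping class $h$ fixing every vertex of $(St_a \cup D_a) \cap (St_b \cup D_b)$ is determined up to the reflection $r$. First I would fix convenient representatives: let $a,b$ be the two dual one-sided curves with nonorientable complements pictured in Figure \ref{Fig(3,0)}, and use the description of $N$ as the sphere with three boundary components $z,t,w$ (with the cross-cap identifications) to write down explicit coordinates for all the curves involved. The key point is that for $(g,n)=(3,0)$ the set $(St_a \cup D_a)\cap(St_b \cup D_b)$ is a concrete finite (or at least combinatorially rigid) collection of isotopy classes, and I would list precisely which curves it contains: the curves disjoint from both $a$ and $b$, together with the one-sided curves dual to $a$ and to $b$ that also lie in the relevant star.

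Next I would argue that fixing all these vertices pins down $h$ geometrically. The standard mechanism is that a mapping class fixing enough curves (enough to determine a "coordinate system" or a filling configuration on $N$) must be either the identity or a finite-order residual symmetry that fixes that configuration setwise. Concretely, I would show that the curves in $(St_a \cup D_a)\cap(St_b \cup D_b)$ fill $N$, or cut it into disks and cross-caps, so that any homeomorphism fixing each of their isotopy classes is isotopic to a homeomorphism preserving a rigid cellular decomposition. On an orientable surface such a configuration would force $h$ to be the identity; the novelty here is that because $N$ is nonorientable and $(g,n)=(3,0)$ is so small, there is one extra symmetry—the reflection $R$—that fixes every curve in this set (indeed $R$ reverses orientation on $z,t,w,x,y$ but preserves each isotopy class), and this is exactly why the conclusion is "$h$ is the identity or $r$" rather than just the identity.

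The crucial step, and the one I expect to be the main obstacle, is verifying that the only nontrivial mapping class fixing the entire set $(St_a \cup D_a)\cap(St_b \cup D_b)$ is $r$. In the orientable high-genus setting one invokes Ivanov-type rigidity, but here I cannot appeal to Theorem \ref{id} directly since its hypothesis requires $g+n\ge 5$. So I would instead give a hands-on argument: cut $N$ along a maximal disjoint subfamily of the fixed curves, identify the resulting pieces, and classify the finite group of mapping classes of $N$ that act trivially on this configuration. This amounts to checking that the subgroup of $Mod_N$ fixing each of these vertices is exactly $\{1, r\}$. I would establish this by showing (i) $r$ does fix every vertex in the set—a direct check from the reflection picture in Figure \ref{Fig(3,0)}—and (ii) the quotient of any fixing homeomorphism by $r$ fixes an orientation-type structure and a filling system, hence is isotopic to the identity. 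The delicate part is making sure the chosen configuration really is rigid on this small surface, i.e. that no further accidental symmetries of the genus-$3$ nonorientable surface survive; I would handle this by exploiting the explicit adjacency data among $a,b,z,t,w,x,y$ and the fact, already used in Theorem \ref{A} and in Scharlemann's computations \cite{Sc}, that low-complexity nonorientable subsurfaces contain very few isotopy classes of essential curves.

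Finally, once the fixing subgroup is shown to be $\{1,r\}$, the theorem follows immediately: the hypothesis forces $h$ to lie in this subgroup, so $h$ is the identity or $r$. I would close by remarking that this is the expected analogue of Theorem \ref{id} for the exceptional case $(g,n)=(3,0)$, with the reflection $r$ accounting for the one extra symmetry that the small-complexity geometry permits.
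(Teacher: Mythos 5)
There is a genuine gap: your outline defers exactly the step that constitutes the theorem. You yourself flag "verifying that the only nontrivial mapping class fixing the entire set $(St_a \cup D_a)\cap(St_b \cup D_b)$ is $r$" as the main obstacle, and then propose—without carrying it out—to cut along a maximal disjoint subfamily of fixed curves and "classify the finite group" of residual symmetries. That classification is the whole content of Theorem \ref{id2}, and your sketch omits the two concrete facts that make the paper's version of it work. First, the identity-versus-$r$ dichotomy does not arise from an abstract count of symmetries of a filling configuration; in the paper it comes from a local orientation analysis. One takes a regular neighborhood $T$ of $a \cup b$, a genus one surface with two boundary curves $x$ and $y$, each of which bounds a M\"obius band in $N$ because $(g,n)=(3,0)$; one chooses a representative $H$ of $h$ fixing $a, b, x, y$, and then Corollary 4.6 of Korkmaz \cite{K2} forces $H$ to preserve the orientation of $x$ if and only if it preserves that of $y$. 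The orientation-preserving case yields the identity on $T$ and on the two M\"obius bands, and the orientation-reversing case yields $R$ after composing $H$ with $R$. Nothing in your proposal plays the role of this step, and "fixes an orientation-type structure, hence is isotopic to the identity" is precisely the assertion that needs proof on a nonorientable surface. Second, even granting that $H$ is isotopic to the identity on each complementary piece, this does not yet give the identity on $N$: piecewise isotopies must be glued across the cutting curves, and in general one only concludes that $h$ is a product of Dehn twists about them. The paper closes this by noting that the twists about $x$ and $y$ are trivial since these curves bound M\"obius bands; your "disks and cross-caps" remark gestures in this direction but you never state or use the triviality of these boundary twists.

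Two further points. The set $(St_a \cup D_a)\cap(St_b \cup D_b)$ is not finite (there are infinitely many curves dual to $a$ and disjoint from $b$, for instance), so the enumeration you propose as a starting point is not available as stated—but more importantly it is unnecessary: the paper's proof uses only that $h$ fixes $[a]$ and $[b]$, hence fixes $T$ and its boundary curves $x, y$ up to isotopy (note $x$ and $y$ bound M\"obius bands and so are not even vertices of $\mathcal{C}(N)$, which is another reason a purely vertex-by-vertex filling argument is awkward here). Your step (i), checking that $r$ fixes every vertex of the set, is correct and easy—indeed $R_\#$ fixes the isotopy class of every nontrivial simple closed curve on this surface, as the paper observes later—but that is the lower bound on the stabilizer; the theorem is the upper bound, and for that your proposal supplies a plan rather than a proof.
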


\begin{proof} Let $a$ and $b$ be two 1-sided simple closed curves with nonorientable complements such that they are dual to
each other. Let $T$ be a regular neighborhood of $a \cup b$. $T$ is a genus one surface with two boundary components,
say $x$ and $y$. Since $(g, n) = (3, 0)$, $a$ and $b$ are dual to each other, and $N_a$ and $N_b$ are nonorientable, we can see that
each of $x$ and $y$ bounds a genus one surface with one boundary component, mobius band, as shown in the Figure \ref{Fig(3,0)}.
Let $h$ be a mapping class such that $h(\gamma) = \gamma$ for every vertex $\gamma$ in the set $(St_a \cup D_a) \cap (St_b \cup D_b)$.
Let $H$ be a representative of $h$ such that $H$ fixes the curves $x, y, a$ and $b$. If $H$ fixes the orientation of $x$ then it also
fixes the orientation of $y$ by Corollary 4.6 in \cite{K2} as it also fixes $a$ and $b$ up to isotopy. Then $H$ is isotopic to
identity on $T$, and also $H$ is isotopic to identity on the two mobius bands on each side of $T$. Since the Dehn twists about $x$
and $y$ are trivial, we see that $H$ is isotopic to identity on $N$. If $H$ changes the orientation of $x$ then it also changes the
orientation of $y$ by Corollary 4.6 in \cite{K2} as it also fixes $a$ and $b$ up to isotopy. In this case the composition
$H \circ R$ fixes the orientation of both of $x$ and $y$ where $R$ is the reflection homeomorphism defined above. By the above argument
$H \circ R$ is isotopic to identity. Hence, $H$ is isotopic to $R$.\end{proof}\\

Our main results will be obtained from the following theorems.

\begin{figure}
\begin{center}
\epsfxsize=2.1in \epsfbox{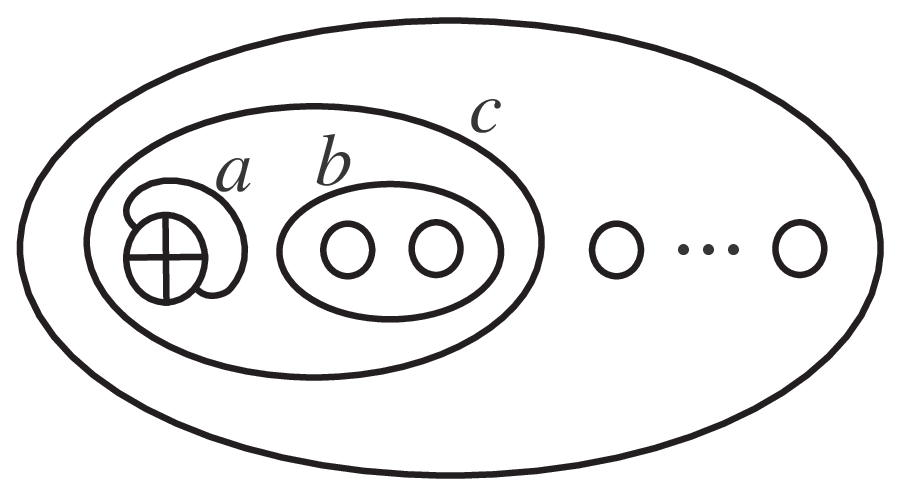} \hspace{0.15in} \epsfxsize=2.1in \epsfbox{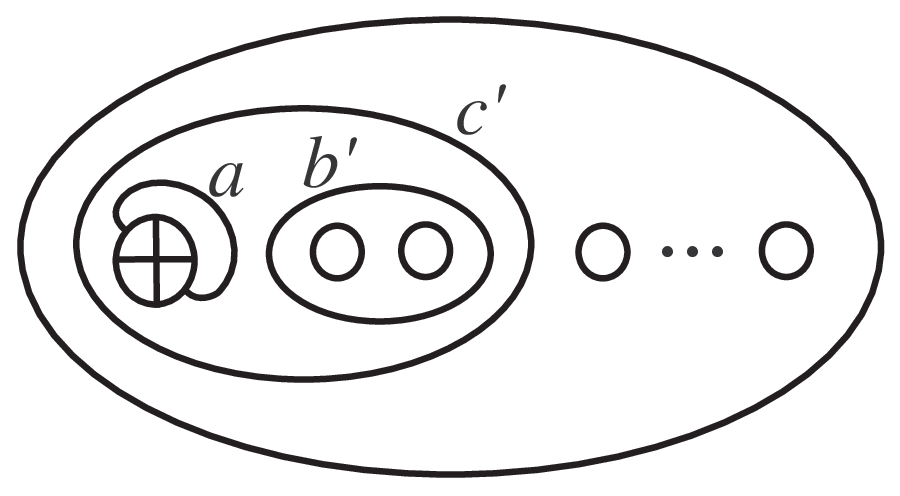}

\small{ \hspace{0.03in} (i)  \hspace{2.2in} (ii)}
\caption{Boundary correspondence}
\label{bdcor}
\end{center}
\end{figure}

\begin{theorem}
\label{New1} Let $N$ be a compact, connected, nonorientable surface of genus $g$ with
$n$ boundary components. Suppose that $g = 1$ and $n \geq 4$. If $\lambda : \mathcal{C}(N) \rightarrow \mathcal{C}(N)$ is a
superinjective simplicial map, then $\lambda$ is induced by a homeomorphism $h : N \rightarrow N$ (i.e $\lambda([a]) = [h(a)]$
for every vertex $[a]$ in $\mathcal{C}(N)$).\end{theorem}

\begin{proof} Suppose that $g=1$ and $n \geq 4$. Let $a$ be a 1-sided simple closed curve. Let $a' \in \lambda([a])$. By Lemma \ref{easy}
we know that $a'$ is a 1-sided simple closed curve. Since $g=1$, both of $N_a$ and $N_{a'}$ are orientable. So, there is a
homeomorphism $f: N \rightarrow N$ such that $f(a) = a'$. Let $f_{\#}$ be the simplicial automorphism induced by $f$ on
$\mathcal{C}(N)$. Then $f_{\#}^{-1} \circ \lambda $ fixes $[a]$. By replacing $f_{\#}^{-1} \circ \lambda $ by $\lambda$ we
can assume that $\lambda([a]) = [a]$. The simplicial map $\lambda$ restricts to $\lambda_a : L_a \rightarrow L_a$, where $L_a$
is the link of $[a]$. It is easy to see that $\lambda_a$ is a superinjective simplicial map. Since $L_a \cong \mathcal{C}(N_a)$,
we get a superinjective simplicial map $\lambda_a :  \mathcal{C}(N_a)\rightarrow \mathcal{C}(N_a)$.
By following the proof of Lemma \ref{inj}, we see that $\lambda_a$ is injective. Since $N_a$ is a sphere with at least five boundary
components, by Shackleton's result in \cite{Sh}, there is a homeomorphism
$G_a: N_a \rightarrow N_a$ such that $\lambda_a$ is induced by $(G_a)_{\#}$.

Let $\partial_a$ be the boundary component of $N_a$
which came by cutting $N$ along $a$. We can see that $G_a (\partial_a) = \partial_a$ as follows: Let $b, c$ be the curves as shown in the Figure \ref{bdcor} (i) ($b$ separates two of the boundary components of $N$). Complete $\{a, b, c\}$ to a top dimensional pair of pants decomposition $P$ on $N$. We assumed that $\lambda([a])=[a]$. Since $a, b, c$ are pairwise disjoint, there exist $b', c'$ some representatives of $\lambda([b])$ and $\lambda([c])$ respectively, such that $a, b', c'$ are pairwise disjoint. Let $P'$ be the set of pairwise disjoint representatives of $\lambda([P])$ containing $a, b', c'$. Since $a$ is adjacent to $b$ and $c$ w.r.t. $P$, $a$ is adjacent to $b'$ and $c'$ w.r.t. $P'$ by Lemma \ref{adjacent}. Since $a$ is 1-sided, there is a genus one subsurface with two boundary components, say $T$, such that $a$ is in $T$ and the boundary components of $T$ are $b', c'$. Since $b$ is adjacent to only $a$ and $c$ w.r.t. $P$, and nonadjacency is preserved by $\lambda$ by Lemma \ref{nonadjacent}, we see that $b'$ must be adjacent to only $a$ and $c'$. We know that $b'$ is a 2-sided curve as $b$ is 2-sided. This shows that $a, b', c'$ are as shown in the
Figure \ref{bdcor} (ii) (i.e. $b'$ separates two of the boundary components of $N$).
Since $\lambda_a$ is induced by $(G_a)_{\#}$, they agree on $[b]$ and $[c]$. This shows that $G_a (\partial_a) = \partial_a$.

By composing $G_a$ with a homeomorphism isotopic to identity, we can assume that $G_a$ maps antipodal points on the boundary
$\partial_a$ to antipodal points. So, $G_a$ induces a homeomorphism $g_a: N \rightarrow N$ such that $g_a (a)= a$ and
$(g_a)_{\#}$ agrees with $\lambda$ on every vertex of $[a] \cup L_a$. So, $(g_a)_{\#}^{-1} \circ \lambda$ fixes every vertex
in $[a] \cup L_a$. Let $D_a$ be the set of isotopy classes of simple closed curves that are dual to $a$.

Claim 1: $(g_a)_{\#}$ agrees with $\lambda$ on every vertex of $\{[a]\} \cup L_a \cup D_a$.

Proof of Claim 1: We already know that $(g_a)_{\#}$ agrees with $\lambda$ on every vertex of $\{[a]\} \cup L_a$. Let $d$ be a
simple closed curve that is dual to $a$. Since $g=1$, there is no 2-sided curve dual to $a$. So, $d$ has to be 1-sided.
Let $T$ be a regular neighborhood of $a \cup d$. We see that $T$ is a real projective plane
with two boundary components, say $x, y$. Since $(g_a)_{\#}^{-1} \circ \lambda$ is identity on ${[a]} \cup L_a$,
$(g_a)_{\#}^{-1} \circ \lambda$ is identity on the complement of $T$. Since $(g_a)_{\#}^{-1} \circ \lambda$
is a superinjective simplicial map, there exists $d' \in \lambda([d])$ such that $d'$ is disjoint from the
complement of $T$, so $d'$ is in $T$. Since $d$ is the only nontrivial simple closed curve in $T$ which is not
isotopic to $a$ by Scharlemann's Theorem in \cite{Sc}, we see that $d'$ is isotopic to $d$. So,
$(g_a)_{\#}^{-1} \circ \lambda ([d]) = [d]$. Hence, $(g_a)_{\#}^{-1} \circ \lambda$ is identity on ${[a]} \cup L_a \cup D_a$.

Claim 2: Let $v$ be any 1-sided simple closed curve on $N$. Then, $(g_v)_{\#} = (g_a)_{\#}$ on $\mathcal{C}(N)$.

Proof of Claim 2: Since $g=1$, $\widetilde{X}(N)$ is connected by Theorem 3.10 in \cite{AK}. So, we can find a sequence
$a \rightarrow a_1 \rightarrow a_2 \rightarrow  \cdots \rightarrow  a_n=v$ of 1-sided simple closed curves connecting
$a$ to $v$ such that each consecutive pair is connected by an edge in $\widetilde{X}(N)$. By Claim 1, $(g_a)_{\#}$ agrees with $\lambda$ on
$\{[a]\} \cup L_a \cup D_a$, and $(g_{a_1})_{\#}$ agrees with $\lambda$ on $\{[a_1]\} \cup L_{a_1} \cup D_{a_1}$. So, we see that
$(g_a)_{\#} ^{-1} (g_{a_1})_{\#}$ fixes every vertex in $([a] \cup L_a \cup D_a) \cap ([a_1] \cup L_{a_1} \cup D_{a_1})$. We note that this is the same thing as fixing every vertex in $(St_a \cup D_a) \cap (St_{a_1} \cup D_{a_1})$ since the vertex sets of them are equal. By Theorem \ref{id},
we get $(g_a)_{\#} = (g_{a_1})_{\#}$ on $\mathcal{C}(N)$. By using the sequence, we get $(g_a)_{\#} = (g_{v})_{\#}$ on $\mathcal{C}(N)$.

Since genus is 1 there is no nonseparating 2-sided curve on $N$. Since every separating curve is in the link, $L_r$, of
some 1-sided curve $r$, we see that $(g_a)_{\#}$ agrees with $\lambda$ on $\mathcal{C}(N)$.
\end{proof}

\begin{theorem}
\label{New2} Let $N$ be a compact, connected, nonorientable surface of genus $g$ with
$n$ boundary components. Suppose that $g=2$ and $n \geq 3$. If $\lambda : \mathcal{C}(N) \rightarrow \mathcal{C}(N)$ is a
superinjective simplicial map, then $\lambda$ is induced by a homeomorphism $h : N \rightarrow N$ (i.e $\lambda([a]) = [h(a)]$
for every vertex $[a]$ in $\mathcal{C}(N)$).\end{theorem}

\begin{proof} Suppose that $g=2$ and $n \geq 3$. Let $a, b$ be as in Figure \ref{bdcor2} (i). Let $P(a)$, $P(b)$ be the connected components of $\widetilde{X}(N)$ that contains $a$, $b$ respectively. By Theorem 3.10
in \cite{AK}, $\widetilde{X}(N)$ has two connected components $P(a)$ and $P(b)$. Since $\lambda$ is superinjective and $a, b$ are
pairwise disjoint, there exist
$a' \in \lambda([a])$, $b' \in \lambda([b])$ such that $a', b'$ are pairwise disjoint. By Lemma \ref{1-sided-cn} we know that
$a'$ and $b'$ are 1-sided simple closed curves with nonorientable complements since $a$ and $b$ are. So, there is a
homeomorphism $f: N \rightarrow N$ such that $f(a) = a'$. Let $f_{\#}$ be the simplicial automorphism induced by $f$ on
$\mathcal{C}(N)$. We see that $f_{\#}^{-1} \circ \lambda $ fixes $[a]$ as in the proof of Theorem \ref{New1}.
By replacing $f_{\#}^{-1} \circ \lambda $ by $\lambda$
we can assume that $\lambda([a]) = [a]$. The simplicial map $\lambda$ restricts to a superinjective map
$\lambda_{a} : L_{a} \rightarrow L_{a}$. Since $L_{a} \cong \mathcal{C}(N_{a})$, we get a superinjective simplicial map
$\lambda_{a} :  \mathcal{C}(N_{a})\rightarrow \mathcal{C}(N_{a})$. Since $N_a$ is a genus one nonorientable
surface with at least four boundary components, by Theorem \ref{New1} there is a homeomorphism $G_{a}: N_{a} \rightarrow N_{a}$
such that $\lambda_{a}$ is induced by $(G_{a})_{\#}$.

\begin{figure}
\begin{center}
\epsfxsize=2.2in \epsfbox{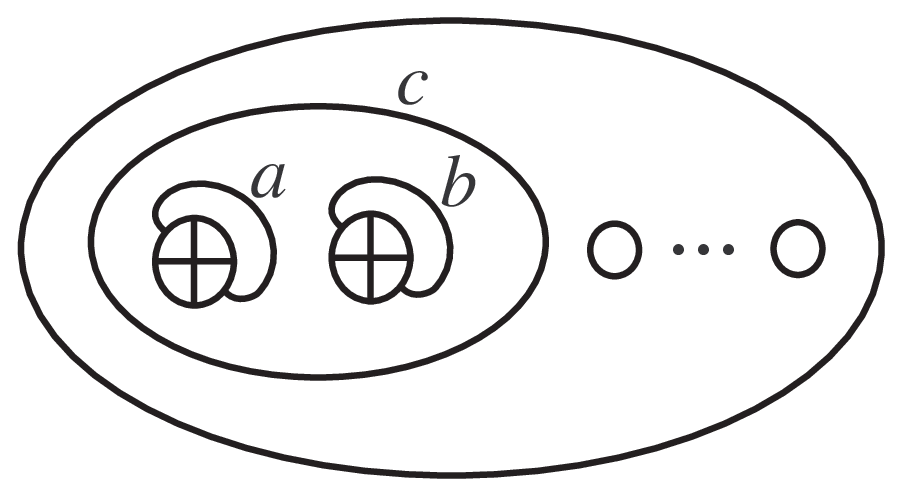} \hspace{0.17in} \epsfxsize=2.2in \epsfbox{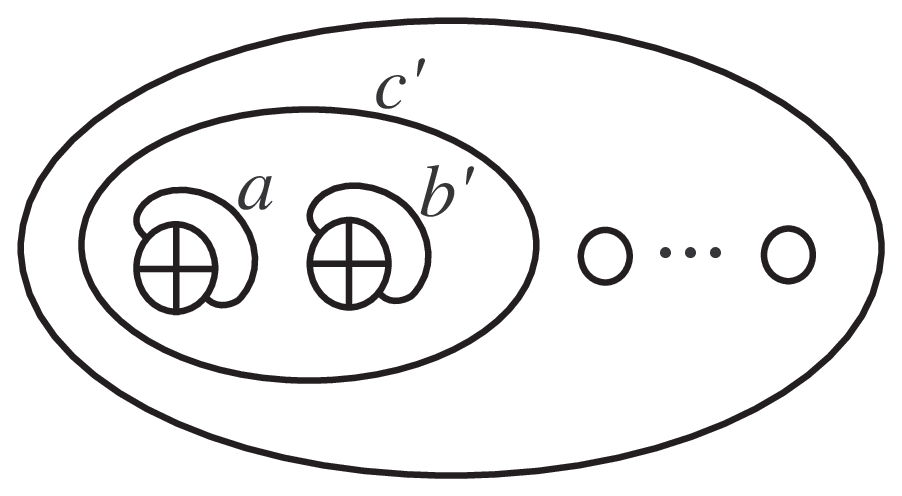}

\small{ \hspace{0.06in} (i)  \hspace{2.14in} (ii)}
\caption{Boundary correspondence}
\label{bdcor2}
\end{center}
\end{figure}

Let $\partial_a$ be the boundary component of $N_a$
which came by cutting $N$ along $a$. We can see that $G_a (\partial_a) = \partial_a$ as follows: Let $c$ be the curve as shown in the Figure \ref{bdcor2} (i). We see that $c$ separates a genus two subsurface containing $a, b$.
Complete $\{a, b, c\}$ to a top dimensional pair of pants decomposition $P$ on $N$. We assumed that $\lambda([a])=[a]$. Since $a, b, c$ are pairwise disjoint, there exist $b', c'$ some representatives of $\lambda([b])$ and $\lambda([c])$ respectively, such that $a, b', c'$ are pairwise disjoint. Let $P'$ be a set of pairwise disjoint representatives of $\lambda([P])$ containing $a, b', c'$. Since $a$ is adjacent to $b$ and $c$ w.r.t. $P$, $a$ is adjacent to $b'$ and $c'$ w.r.t. $P'$ by Lemmma \ref{adjacent}. Since $a$ is 1-sided there is a genus one subsurface with two boundary components, say $T$, such that $a$ is in $T$ and the boundary components of $T$ are $b', c'$. Since $b$ is adjacent to only $a, c$ w.r.t. $P$, and nonadjacency is preserved by $\lambda$ by Lemma \ref{nonadjacent}, we see that $b'$ must be only adjacent to and $a, c'$. We also know that $b'$ must be a 1-sided curve whose complement is nonorientable, as $b$ is such a curve (see Lemma \ref{1-sided-cn}). This shows that $a, b', c'$ are as shown in the
Figure \ref{bdcor2} (ii). Since $\lambda_a$ is induced by $(G_a)_{\#}$, they agree on $[b]$ and $[c]$. This shows that $G_a (\partial_a) = \partial_a$.

Now we continue as follows: As in the proof of Theorem \ref{New1}, by composing $G_a$ with a homeomorphism isotopic to identity, we can assume that $G_a$ maps antipodal points on the boundary
$\partial_a$ to antipodal points. So, $G_a$ induces a homeomorphism $g_a: N \rightarrow N$ such that $g_a (a)= a$ and
$(g_a)_{\#}$ agrees with $\lambda$ on every vertex of $[a] \cup L_a$. So, $(g_a)_{\#}^{-1} \circ \lambda$ fixes every vertex in $[a] \cup L_a$.

As in the proof of Theorem \ref{New1}, we get $(g_{a})_{\#}$ agrees with $\lambda$ on $\{[a]\} \cup L_{a}$ and on the isotopy class of every 1-sided curve dual to $a$. To see that it also agrees on the isotopy class of every 2-sided curve dual to $a$ we do the following:
Let $d$ be a 2-sided simple closed curve that is dual to $a$. Let $K$ be a regular neighborhood of $a \cup d$. We see that $K$ is a Klein bottle with
one boundary component, say $x$. Since $(g_a)_{\#}^{-1} \circ \lambda$ is identity on ${[a]} \cup L_a$,
$(g_a)_{\#}^{-1} \circ \lambda$ is identity on the complement of $K$. Since $(g_a)_{\#}^{-1} \circ \lambda$
is a superinjective simplicial map, there exists $d' \in \lambda([d])$ such that $d'$ is disjoint from the
complement of $K$, so $d'$ is in $K$. We know by Lemma \ref{easy} that $d'$ should be 2-sided. Since $d$ is the only 2-sided
nontrivial simple closed curve in $K$, we see that $d'$ is isotopic to $d$ (see Figure \ref{figure2}). So, $(g_a)_{\#}^{-1} \circ \lambda ([d]) = [d]$. Hence, $(g_a)_{\#}^{-1} \circ \lambda$ is identity on ${[a]} \cup L_a \cup D_a$.
So, $(g_{a})_{\#}$ agrees with $\lambda$ on $\{[a]\} \cup L_{a} \cup D_{a}$.

As in the proof of Theorem \ref{New1},
we can also see that if $v$ is any 1-sided simple closed curve such that $[v] \in P(a)$ (where $P(a)$ is the connected component of $\widetilde{X}(N)$ that contains $a$), then $(g_a)_{\#} = (g_{v})_{\#}$
on $\mathcal{C}(N)$. Hence, we can see that there exists a homeomorphism $h_1$ such that our original superinjective map $\lambda$
agrees with $(h_1)_{\#}$, on every vertex of $\{[a]\} \cup L_{a} \cup D_{a} \cup \{[v]\} \cup L_{v} \cup D_{v}$ for any 1-sided simple closed
curve $v$ such that $[v] \in P(a)$. Similarly, there exists a homeomorphism, $h_2$, such that $\lambda$ agrees with $(h_2)_{\#}$
on every vertex of $\{[b]\} \cup L_{b} \cup D_{b} \cup \{[w]\} \cup L_{w} \cup D_{w}$ for any 1-sided simple closed curve $w$ such that $[w] \in P(b)$. So, $(h_1)_{\#}^{-1} (h_2)_{\#}$ fixes everything in the intersection of these two sets, for every such $v, w$.

\begin{figure}
\begin{center}
\epsfxsize=2.5in \epsfbox{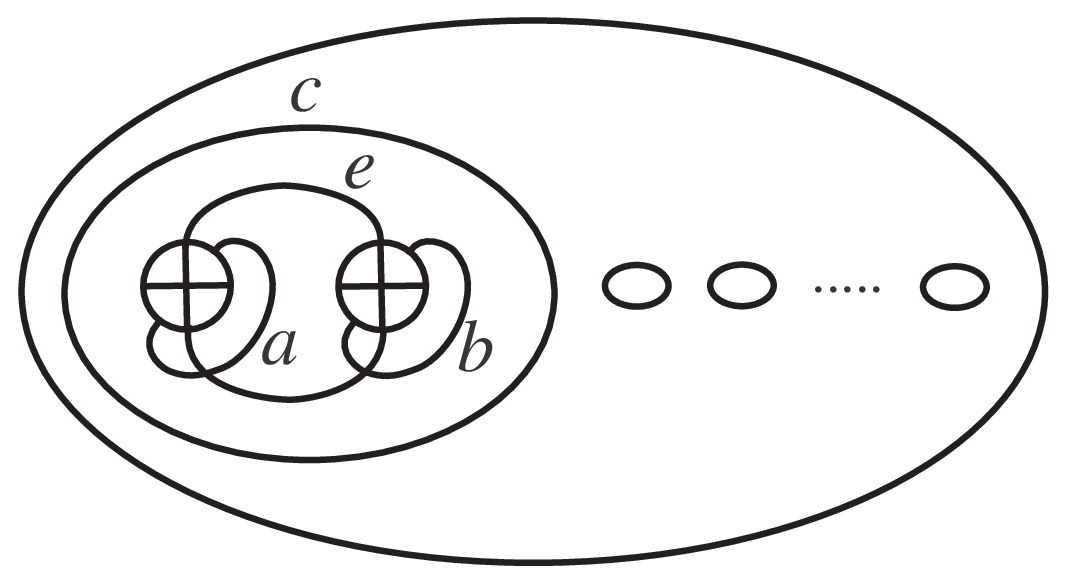} \hspace{0.1in} \epsfxsize=2.5in
\epsfbox{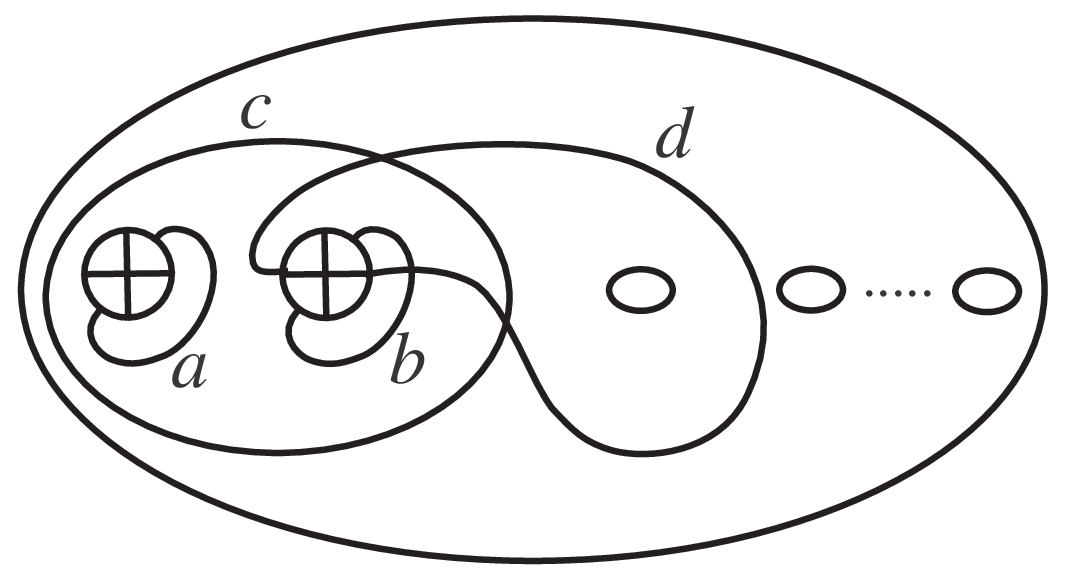}
\caption{Some curves on genus 2 surface with boundary}
\label{Fig11}
\end{center}
\end{figure}

Claim: $(h_1)_{\#} = (h_2)_{\#}$.

Proof of Claim: Let $c, d, e$ be as in Figure \ref{Fig11}. Let $T$ be the genus 2 subsurface containing $a$ and $b$ and bounded
by $c$ as shown in the figure. Since $(h_1)_{\#}^{-1} (h_2)_{\#}$ fixes $[c]$, (by composing with a map isotopic to
identity if necessary) we may assume that ${h_1}^{-1} h_2$ fixes $c$. By the above argument we know that
${h_1}^{-1} h_2$ fixes every nontrivial simple closed curve up to isotopy in the complement of $T$. Since the complement of
$T$ together with $c$ is a sphere with at least four boundary components, ${h_1}^{-1} h_2$ fixes $c$ and every nontrivial
simple closed curve up to isotopy in the complement of $T$, ${h_1}^{-1} h_2$
is isotopic to identity in the complement of $T$ by Lemma 7.1 in \cite{AK}. So, ${h_1}^{-1} h_2$ is isotopic to identity on $c$.
Since ${h_1}^{-1} h_2$ fixes $a$, $b$ up to isotopy, it fixes the curve $e$
shown in the figure up to isotopy, see the proof of Theorem \ref{A} and Figure \ref{figure2}. Recall that
the vertex set of the curve complex in $T$ is $\{[e], t_e^m ([a]) : m \in \mathbb{Z} \}$, and $e$ is the only nontrivial 2-sided curve up to isotopy
on $T$. By looking at the complex we see that
the map ${h_1}^{-1} h_2$ actually fixes every nontrivial simple closed curve on $T$ up to isotopy. Since it is also isotopic to identity on $c$,
${h_1}^{-1} h_2$ is isotopic to identity on $T$. This implies that $(h_1)_{\#}^{-1} (h_2)_{\#} = t_c^m$ for some $m \in \mathbb{Z}$. Since $(h_1)_{\#}^{-1} (h_2)_{\#}$ also fixes $[d]$, we have $m=0$, so $(h_1)_{\#}^{-1} (h_2)_{\#} = [id]$.

Let $h=h_1$. If $x$ is a 1-sided simple closed curve with nonorientable complement (i.e. $[x]$ is a vertex in $\widetilde{X}(N)$),
then $[x]$ is in one of $P(a)$ or $P(b)$ by Theorem 3.10 in \cite{AK}. So, by the above arguments $\lambda$ agrees with $h_{\#}$
on $\{[x]\} \cup L_{x} \cup D_{x}$. Since any nontrivial simple closed curve is
in the dual or link of a 1-sided simple closed curve whose complement is nonorientable, $\lambda$ agrees with
$h_{\#}$ on $\mathcal{C}(N)$.
\end{proof}

\begin{theorem}
\label{New3} Let $N$ be a compact, connected, nonorientable surface of genus $g$ with
$n$ boundary components. Suppose that $g \geq 3$, $g + n \geq 5$. If $\lambda : \mathcal{C}(N) \rightarrow \mathcal{C}(N)$ is a
superinjective simplicial map, then $\lambda$ is induced by a homeomorphism $h : N \rightarrow N$.\end{theorem}

\begin{figure}
\begin{center}
\epsfxsize=2.6in \epsfbox{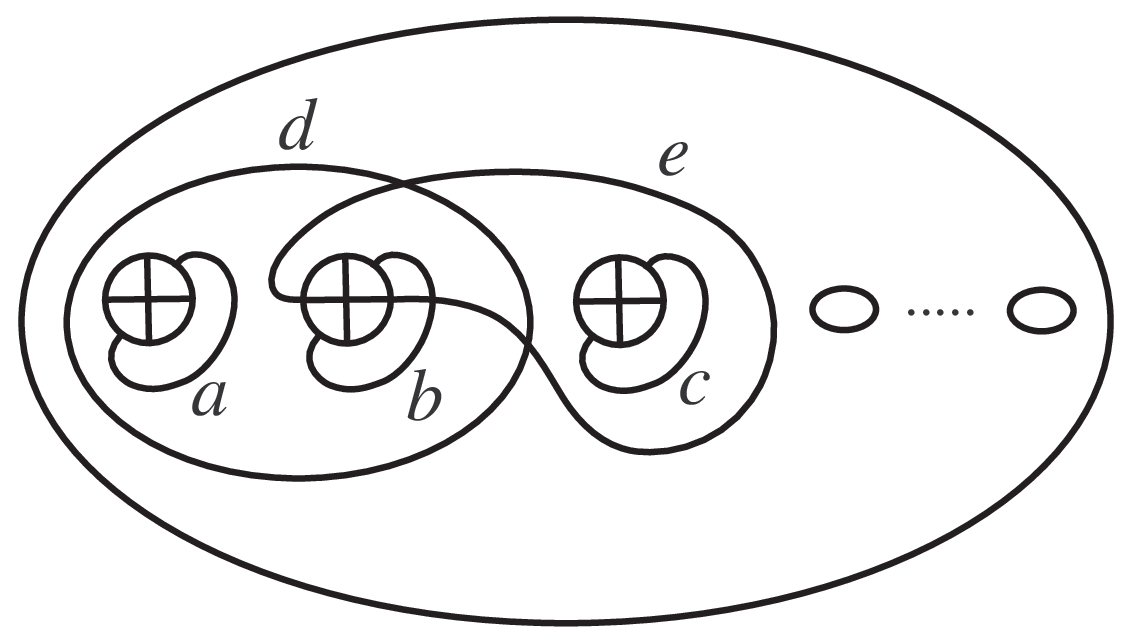} \hspace{0.1in} \epsfxsize=2.6in
\epsfbox{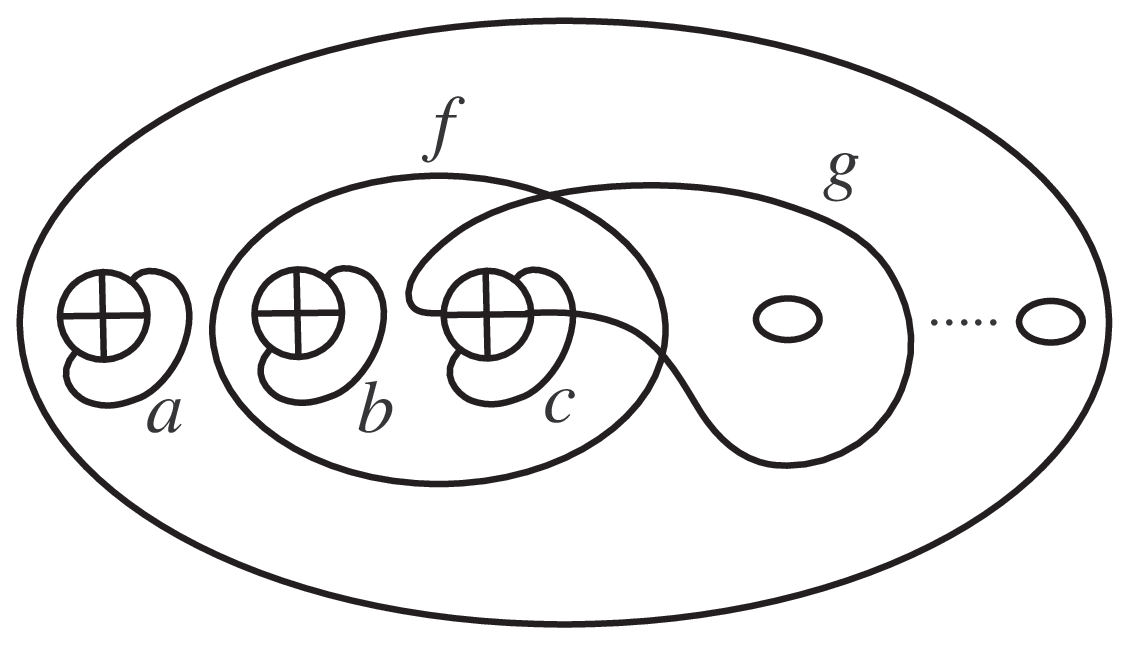}
\caption{Some curves on genus 3 surface with boundary}
\label{Fig12}
\end{center}
\end{figure}

\begin{proof} We will prove the result using induction on $g$.

Suppose that $g=3$. $N$ has at least two boundary components. Let $a, b, c$ be as in Figure \ref{Fig12}. Let $P(a)$, $P(b)$ and $P(c)$ be the connected components of $\widetilde{X}(N)$ that contains $a$, $b$ and $c$ respectively. By Theorem 3.10 in \cite{AK}, $\widetilde{X}(N)$ has three connected components $P(a)$, $P(b)$ and $P(c)$.
As in the proof of Theorem \ref{New2},
there exist homeomorphisms $h_1, h_2, h_3$ such that $\lambda$ agrees with $(h_1)_\#$ on every vertex of
$\{[a]\} \cup L_{a} \cup D_{a} \cup \{[v]\} \cup L_{v} \cup D_{v}$ for any 1-sided simple closed curve $v$ such that $[v] \in P(a)$,
$\lambda$ agrees with $(h_2)_\#$ on every vertex of $\{[b]\} \cup L_{b} \cup D_{b} \cup \{[w]\} \cup L_{w} \cup D_{w}$
for any 1-sided simple closed curve $w$ such that $[w] \in P(b)$, and $\lambda$ agrees with $(h_3)_\#$ on every vertex of
$\{[c]\} \cup L_{c} \cup D_{c} \cup \{[z]\} \cup L_{z} \cup D_{z}$ for any 1-sided simple closed curve $z$ such that $[z] \in P(c)$.
Let $d, e, f, g$ be as in Figure \ref{Fig12}. By following the proof of Theorem \ref{New2} and using the curves $d, e$ we can see that
$(h_1)_\# = (h_2)_\#$. (We note that in this case the complement of $T$ will be a genus one surface with at least 3 boundary components, and the difference map will be isotopic to identity on the complement of $T$, see Lemma 7.1 in \cite{AK}. The rest of the proof will follow as in the proof of Theorem \ref{New2}.) Similarly, by using the curves $f, g$ we can see that $(h_2)_\# = (h_3)_\#$. Hence, we have $(h_1)_\# = (h_2)_\# = (h_3)_\#$.

Let $h=h_1$. If $x$ is a 1-sided simple closed curve with nonorientable complement (i.e. $[x]$ is a vertex in $\widetilde{X}(N)$), then $[x]$ is
in one of $P(a)$ or $P(b)$ or $P(c)$ by Theorem 3.10 in \cite{AK}. By the above arguments and the proof of Theorem \ref{New2}, we see that $\lambda$ agrees with $h_{\#}$ on $\{[x]\} \cup L_{x} \cup D_{x}$. Since any nontrivial simple closed
curve is in the dual or link of a 1-sided simple closed curve whose complement is nonorientable, $\lambda$ agrees with $h_{\#}$ on $\mathcal{C}(N)$.

We will prove the remaining cases by using induction on $g$. Assume that the theorem is true for some genus $g-1$ where $g-1 \geq 3$. We will prove that it is true for genus $g$. Let $a$ be a 1-sided simple closed curve on $N$ such that $N_a$ is nonorientable.
Let $a' \in \lambda([a])$. By Lemma \ref{1-sided-cn}, $a'$ is a 1-sided simple closed curve with nonorientable complement.
There is a homeomorphism $f: N \rightarrow N$ such that $f(a) = a'$. Let $f_{\#}$ be the simplicial automorphism induced by $f$ on $\mathcal{C}(N)$. Then $f_{\#}^{-1} \circ \lambda$ fixes $[a]$. By replacing $f_{\#}^{-1} \circ \lambda $ by $\lambda$ we can assume
that $\lambda([a]) = [a]$. The simplicial map $\lambda$ restricts to a superinjective simplicial map $\lambda_a : L_a \rightarrow L_a$
as before. Since $L_a \cong \mathcal{C}(N_a)$, we get a superinjective simplicial map $\lambda_a :  \mathcal{C}(N_a)\rightarrow \mathcal{C}(N_a)$.
By the induction assumption, there is a homeomorphism $G_a: N_a \rightarrow N_a$ such that $\lambda_a$ is induced by $(G_a)_{\#}$.

By following the proof of Theorem \ref{New1}, we see the following: There is a homeomorphism $g_a$ such that $\lambda$ agrees with $(g_a)_\#$ on $\{[a]\} \cup L_a \cup D_a$, and if $v$ is a 1-sided simple closed curve with nonorientable complement such that $[v]$ is connected to $[a]$ by a path in $\widetilde{X}(N)$, then $(g_v)_{\#} = (g_a)_{\#}$ on $\mathcal{C}(N)$. We also see that $\lambda$ agrees with $(g_v)_\#$ on $\{[v]\} \cup L_v \cup D_v$. Let $w$ be any 1-sided simple closed curve with nonorientable complement. Between $[a]$ and $[w]$ there is a path in $\widetilde{X}(N)$ since $\widetilde{X}(N)$ is connected by Theorem 3.10 in \cite{AK}. So, we have $(g_a)_{\#} = (g_w)_{\#}$ on $\mathcal{C}(N)$. Since the isotopy class of every nontrivial simple closed curve is in the link or dual of some 1-sided simple closed curve with nonorientable complement,
we see that $\lambda$ agrees with $(g_a)_\#$ on $\mathcal{C}(N)$.
By induction, we get our result.\end{proof}

\begin{figure}
\begin{center}
\epsfxsize=2.5in \epsfbox{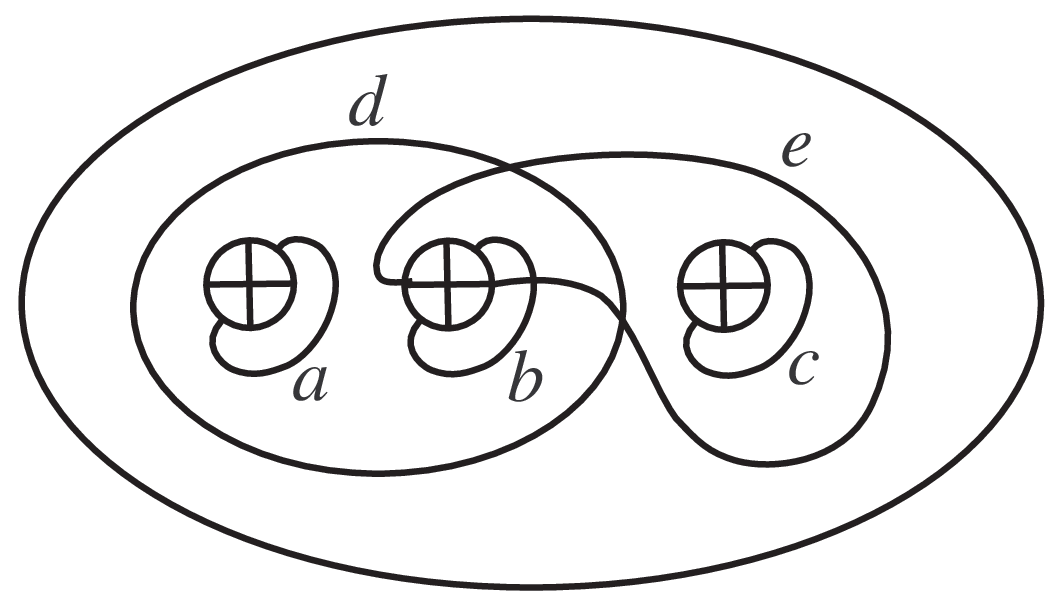}
\caption{Some curves on genus 3 surface}
\label{Figlast}
\end{center}
\end{figure}

\begin{theorem}
\label{New4} Let $N$ be a closed, connected, nonorientable surface of genus 3. If $\lambda : \mathcal{C}(N) \rightarrow \mathcal{C}(N)$ is a
superinjective simplicial map, then $\lambda$ is induced by a homeomorphism $h : N \rightarrow N$ (i.e $\lambda([a]) = [h(a)]$
for every vertex $[a]$ in $\mathcal{C}(N)$).\end{theorem}

\begin{proof} Suppose $(g, n) = (3, 0)$. Consider the curves $a, b, c$ as shown in Figure \ref{Figlast}.
Let $Q(a), Q(b), Q(c)$ be the connected components of $a, b, c$ in $X(N)$ respectively. By using Lemma \ref{1-sided-cn}, Theorem \ref{A},
Theorem \ref{id2}, and following the proof of Theorem \ref{New3}, we see that there exist homeomorphisms $h_1, h_2, h_3$ such that $\lambda$
agrees with $(h_1)_\#$ on $\{[a]\} \cup L_{a} \cup D_{a} \cup \{[v]\} \cup L_{v} \cup D_{v}$ for any 1-sided simple closed curve
$v$ such that $[v] \in Q(a)$, $\lambda$ agrees with $(h_2)_\#$ on $\{[b]\} \cup L_{b} \cup D_{b} \cup \{[w]\} \cup L_{w} \cup D_{w}$
for any 1-sided simple closed curve $w$ such that $[w] \in Q(b)$, and $\lambda$ agrees with $(h_3)_\#$ on
$\{[c]\} \cup L_{c} \cup D_{c} \cup \{[z]\} \cup L_{z} \cup D_{z}$ for any 1-sided simple closed curve $z$ such that $[z] \in Q(c)$.
Let $d, e$ be as in Figure \ref{Figlast}. By following the proof of Theorem \ref{New3}, and using the curves $d, e$ we can see that
$(h_1)_\# = (h_2)_\#$ or $(h_1)_\# = (h_2)_\# \circ R_\#$ where $R$ is the reflection homeomorphism that we described at the beginning of
this section. Similarly, we can see that $(h_2)_\# = (h_3)_\#$ or $(h_2)_\# = (h_3)_\# \circ R_\#$. Since $R_\#$
fixes the isotopy class of every nontrivial simple closed curve, the action of all these maps $(h_1)_\#$, $(h_2)_\#$ and $(h_3)_\#$
are the same on $\mathcal{C}(N)$.

Let $h=h_1$. If $x$ is a 1-sided simple closed curve with nonorientable complement, then $[x]$ is in one of $Q(a)$ or $Q(b)$ or $Q(c)$ by
Theorem 3.9 in \cite{AK}. By the above arguments and the proof of Theorem \ref{New1}, we see that $\lambda$
agrees with $h_{\#}$ on $\{[x]\} \cup L_{x} \cup D_{x}$. Since any nontrivial simple closed curve is in the dual or link of
a 1-sided simple closed curve whose complement is nonorientable, $\lambda$ agrees with $h_{\#}$ on $\mathcal{C}(N)$. This completes the
proof.\end{proof}\\

Combining our results in Theorem \ref{A}, Theorem \ref{New1}, Theorem \ref{New2}, Theorem \ref{New3} and Theorem \ref{New4} we get our main result:

\begin{theorem} Let $N$ be a compact, connected, nonorientable surface of genus $g$ with
$n$ boundary components. Suppose that either $(g, n) \in \{(1, 0), (1, 1), (2, 0), (2, 1), (3, 0)\}$ or $g + n \geq 5$.
If $\lambda : \mathcal{C}(N) \rightarrow \mathcal{C}(N)$ is a superinjective simplicial map, then $\lambda$ is
induced by a homeomorphism $h : N \rightarrow N$.\end{theorem}

\vspace{0.2cm}

{\bf Acknowledgments}
\vspace{0.3cm}

We thank Peter Scott, Nikolai Ivanov, Mustafa Korkmaz and Ferihe Atalan for some discussions and their comments about this paper.
We also thank the referee for the comments about the paper.

eirmak@bgsu.edu

Bowling Green State University

Department of Mathematics and Statistics

Bowling Green, 43403, OH

\begin{thebibliography}{XXXX}

\vspace{0.3cm}\bibitem {A} Atalan, F.: Automorphisms of complexes of curves on odd genus nonorientable surfaces, available at
arxiv: math.GT/0512368.

\bibitem {AK} Atalan, F. and Korkmaz, M.: Automorphisms of complexes of curves on nonorientable surfaces, preprint.

\bibitem {BhM} Behrstock, J. and Margalit, D.: Curve complexes and finite index subgroups of mapping class groups,
Geometriae Dedicata 118, (1), 71-85 (2006).

\bibitem {BeM} Bell, B. and Margalit, D.: Injections of Artin groups,
Geometry and Topology, 8, 1361-1384 (2004).

\bibitem {BM1}  Brendle, T.E. and Margalit, D.: Commensurations of the Johnson kernel,
Geometry and Topology, 8, 1361-1384 (2004).


\bibitem {FIv} Farb, B. and Ivanov, N.: The Torelli Geometry and Its
Applications, Math. Research Letters., 12, 293-301 (2005).


\bibitem {Har} Harvey, W.J.: Boundary structure of the modular group, Riemann Surfaces
and Related Topics: Proc. of the 1978 Stony Brook Conference (I. Kra and B.
Maskit, eds.), Ann. of Math. Stud. 97, Princeton, 245-251, (1981).

\bibitem {Ir1} Irmak, E.: Superinjective simplicial maps of complexes of curves and injective homomorphisms of subgroups
of mapping class groups, Topology 43, 3, 513-541 (2004).

\bibitem {Ir2} Irmak, E.: Superinjective simplicial maps of complexes of curves and injective homomorphisms of subgroups
of mapping class groups II, Topology and Its Applications 153, 1309-1340 (2006).

\bibitem {Ir3} Irmak, E.: Complexes of nonseparating curves and mapping class groups, Michigan Mathematical Journal, 54, 81-110 (2006).

\bibitem {IrK} Irmak, E. and Korkmaz, M.: Automorphisms of the Hatcher-Thurston complex, Israel Journal of Math. 162, 183-196 (2007).

\bibitem {Ir4} Irmak, E.: Injective simplicial maps of the complexes of arcs on nonorientable surfaces, Algebraic \& Geometric Topology,
9, 2055-2077 (2009).


\bibitem {IrM} Irmak, E. and McCarthy, J.: Injective simplicial maps of the arc complex, Turkish J. of Math. 34, 339-354 (2010).

\bibitem {Iv1} Ivanov, N.V.: Automorphisms of complexes of curves and of Teichmuller
spaces, International Mathematics Research Notices, 14, 651-666 (1997).


\bibitem {IMc} Ivanov, N.V. and McCarthy, J.D.: On Injective Homomorphisms between
Teichmuller Modular Groups I, Inventiones Mathematicae 135, 425-486
(1999).

\bibitem {K1} Korkmaz, M.: Automorphisms of complexes of curves on punctured spheres and on
punctured tori. Top. and App., 95 (2), 85-111 (1999).

\bibitem {K2} Korkmaz, M.: Mapping class group of nonorientable surfaces. Geo. Dedicata, 89,
109-133 (2002).

\bibitem {L} Luo, F.: Automorphisms of the complexes of curves,
Topology, 39 (2), 283-298 (2000).

\bibitem {M} Margalit, D.: Automorphisms of the pants complex, Duke Mathematical Journal
121, 457-479 (2004).

\bibitem {MV}  McCarthy, J.D. and Vautaw, W.R.: Automorphisms of Torelli Groups.
available at arxiv:math.GT/0311250.

\bibitem {S} Schaller, P.S.: Mapping class groups of hyperbolic surfaces and automorphism groups of graphs, Compo. Math. 122,
243-260 (2000).

\bibitem {Sc} Scharlemann, M.: The complex of curves on nonorientable surfaces, J. London Math. Soc. (2) 25, 171-184 (1982).

\bibitem {Sh} Shackleton, K.: Combinatorial rigidity in curve complexes and mapping class groups, Pacific J. Math. 230 (1), 217-232 (2007).



\end{thebibliography}
\end{document}